\newtheorem{theorem}{Theorem}[section]
\newtheorem{definition}{Definition}
\newtheorem{proposition}[theorem]{Proposition}
\newtheorem{lemma}[theorem]{Lemma}
\newcommand{\N}{{\mathbb N}}
\newcommand{\Z}{{\mathbb Z}}
\renewcommand{\P}{{\mathbb P}}
\newcommand{\x}{{\mathbf x}}
\newcommand{\y}{{\mathbf y}}
\newcommand{\w}{{\mathbf w}}
\newcommand{\z}{{\mathbf z}}
\newcommand{\footremember}[2]{%
\footnote{#2}
\newcounter{#1}
\setcounter{#1}{\value{footnote}}%
}
\title{Existence and coalescence of directed infinite geodesics
in the percolation cone for Durrett-Liggett class of measures}
\author{%
  Kumarjit Saha\footremember{TIFR}{TIFR Center for Applicable Mathematics, Bangalore.}
  \footnote{This work is benefited from the support of the AIRBUS Group Corporate Foundation Chair in Mathematics of Complex Systems established in TIFR CAM.}}
\date{\today}
\begin{document}

\maketitle

\begin{abstract}
For first passage percolation (FPP) on integer lattice 
 with i.i.d. passage time distributions, in order to show existence of semi-infinite geodesics 
 along a fixed direction, one requires unproven assumptions on the limiting shape.
We consider FPP on two-dimensional integer lattice with i.i.d. passage times distributed as Durrett-Liggett class of measures. For this model, we show that  along any direction
in a deterministic angular sector (known as percolation cone), starting from every lattice point there exists an infinite geodesic along that direction and such directed geodesics coalesce almost surely. 
We prove that for this  model, bi-infinite geodesics exist almost surely.
Our proof does not require any assumption on the limiting shape. 
 \end{abstract}

\noindent Keywords: First passage percolation, directed infinite geodesic, oriented percolation, percolation cone.\\
\noindent
AMS Classification: 60D05\\

\section{Introduction}
First passage percolation (FPP) was introduced in 1965 by Hammersley and Welsh \cite{HW65} as a  stochastic model for fluid flow through a porous medium. We consider FPP on the $2$-dimensional integer lattice $(\mathbb{Z}^2, {\cal E}^2)$. Two vertices $\x, \y \in \mathbb{Z}^2$ are said to be neighbour if $||\x - \y||_1 = 1$  and the edge set ${\cal E}^2$ consists of all the line segments between the neighbouring vertices. To each edge $e \in {\cal E}^2$, a strictly positive random passage time $t(e)$ is attached. Let ${\mathbb P}$ denote the corresponding probability measure.
For any two lattice points $\x = (\x(1),\x(2))$  and $\y = (\y(1),\y(2))$ in $\mathbb{Z}^2$, the passage time between $\x$ and $\y$ is denoted by 
\begin{align*}
\label{eq:PassageTimeSites}
\tau(\x, \y) := \inf_{\gamma:\x \leftrightarrow \y}\tau(\gamma),
\end{align*}
where the infimum is taken over all finite lattice paths from $\x$ to $\y$. For 
any finite lattice path $\gamma$, the passage time $\tau(\gamma)$ across the path $\gamma$ is the sum of the passage times $t(e)$
attached to the edges along the path $\gamma$. 
A geodesic from $\x$ to $\y$ is a finite lattice path $\gamma$
from $\x$ to $\y$ such that $\tau(\gamma) = \tau(\x,\y)$. Here, with a slight abuse of terminology,  
 a `lattice path' or a `path' $\gamma$ means  a  sequence of neighbouring lattice points 
 $\{\x_i :i \in I, \x_i \in \mathbb{Z}^2 \}$,
  where the index set $I$ can be any (finite or infinite) subset of $\mathbb{Z}$.
For $i \in I$, we denote the vertex $\x_i$ as $\gamma(i)$. For a semi-infinite path $\gamma$
we usually take $I= \mathbb{N}$, whereas for a bi-infinite path $\gamma$
we take $I= \mathbb{Z}$. 
An \textit{infinite geodesic} is an infinite lattice path such that each finite 
subpath is a geodesic. 

In the mid $90$'s people started
working on the understanding of infinite 
geodesics for lattice FPP (see \cite{LN96}, \cite{WW98}). This was partly motivated by the connection between the existence of bi-infinite geodesics in FPP and the existence of non-constant ground state
in the disordered ferromagnetic  spin models (see \cite{N94} for more details).
The main questions involve (i) existence and uniqueness of semi-infinite geodesic along a fixed direction,
(ii) coalescence of such directed geodesics (starting from all the lattice points) and 
(iii) absence of bi-infinite geodesics.
In general for $\theta \in [0,2\pi)$, a semi-infinite 
path $\gamma$ is said to be along the direction $\theta$ if and only if
\begin{equation}
\label{eq:GeodesicDirection}
\lim_{n \to \infty} \text{arg}\bigl(\gamma(n)\bigl /||\gamma(n)||_1\bigr)=\theta,
\end{equation}
where for any unit vector $\x$, $\text{arg}(\x)$ denotes the argument of $\x$. 
In what follows, a semi-infinite path is simply referred as 
an infinite path.   
Two infinite lattice paths $\gamma_1$ and $\gamma_2$
are said to coalesce if the tails of the corresponding sequences of neighbouring vertices eventually become same, i.e., there exist 
$i_1, i_2 \in \mathbb{N}$ such that $\gamma_1(i_1 + k) = \gamma_2(i_2 + k)$ for all $k\geq 0$. 

Till now, for FPP on integer lattice with i.i.d. passage time distributions, 
these questions are far from completely understood. The main theorems proved to date require unproven assumptions on the limiting shape.
In order to state these assumptions, we need to describe the associated  limiting shape. 

We extend the notion of passage times for general $\x, \y \in \mathbb{R}^2$, 
by setting $\tau(\x, \y)= 
\tau(\tilde{\x}, \tilde{\y})$ where $\tilde{\x}, \tilde{\y} \in \mathbb{Z}^2$ are the unique 
lattice points such that $\x \in \tilde{\x} + [-1/2, 1/2)^2$ and 
$\y \in \tilde{\y} + [-1/2, 1/2)^2$. For $t > 0$ set $B(t) := \{\x \in \mathbb{R}^2 : \tau(\mathbf{0}, \x) \leq t\}$ and $B(t)/t := \{\x/t : \x \in B(t)\}$. Under mild conditions on passage time distributions it is shown that (\cite{B90}, \cite{CD81})
there exists a deterministic, compact convex set ${\cal B}$,
called as the limiting shape, with non empty interior such that for all $\epsilon > 0$
\begin{align}
{\mathbb P}((1-\epsilon){\cal B} \subseteq B(t)/t \subseteq (1+\epsilon){\cal B}
\text{ for all large }t) = 1.
\end{align}

Though it is believed that for any product measure on $\mathbb{Z}^2$ with continuous passage time distributions, the limiting shape should have ``uniformly positive curvature" (for 
a definition of uniformly positive curvature see \cite{N94}), there is not a single proven example till date.
Under the assumptions that 
\begin{itemize}
\item[($A_1$)] ${\mathbb P}$ is a product measure with continuous marginals having
 exponential moments,
\item[($A_2$)] the limiting shape $\mathcal{B}$ has uniformly positive curvature,
\end{itemize}
Newman showed the following (Theorem 2.1 of \cite{N94}):
\begin{theorem}
\label{th:NewmanLiceaFPPd2}
Let $\nu$ be any continuous (Borel) probability measure on $[0,2\pi)$. Then $\nu$
almost surely for any $\theta \in [0,2\pi)$, 
there exists a unique infinite geodesic starting from the origin with asymptotic direction $\theta$.
\end{theorem}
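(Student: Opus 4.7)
The plan is to establish existence of directed infinite geodesics by a compactness argument on finite geodesics, and uniqueness by a planarity-plus-Fubini argument, both leveraging $(A_1)$ and $(A_2)$.

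For existence, I would first prove a \emph{straightness estimate}: almost surely, for every sufficiently distant lattice point $\v$, any geodesic from $\mathbf{0}$ to $\v$ is contained in a cone of angular half-width $\delta_{||\v||_1} \to 0$ around the ray from $\mathbf{0}$ to $\v$. The mechanism is standard. The shape theorem gives $\tau(\mathbf{0}, n\x) \approx n g(\x)$ for the norm $g$ dual to $\mathcal{B}$; assumption $(A_2)$ promotes the triangle inequality for $g$ to a quantitative strict-convexity gap, so that any detour through an intermediate vertex $\u$ with angular deviation $\alpha$ from the $\mathbf{0}$-$\v$ ray costs at least $c\, \alpha^2 ||\v||_1$ extra passage time; and the exponential moments in $(A_1)$ yield concentration of $\tau$ around $g$ sharp enough to rule out all such detours through a union bound over the $O(||\v||_1^2)$ candidate $\u$'s. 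Once straightness is in hand, fix a direction $\theta$, pick lattice points $\v_n$ with $\mathrm{arg}(\v_n/||\v_n||_1) \to \theta$ and $||\v_n||_1 \to \infty$, and let $\gamma_n$ be any geodesic from $\mathbf{0}$ to $\v_n$. Straightness confines all initial segments of the $\gamma_n$ to a fixed thin cone, so a diagonal extraction produces a limit infinite path $\gamma$. Every finite subpath of $\gamma$ inherits optimality from some $\gamma_n$, so $\gamma$ is an infinite geodesic, and its asymptotic direction is $\theta$ by straightness applied to long prefixes.

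For uniqueness, fix $\omega$ in the full-measure event above and set $D(\omega) := \{\theta \in [0, 2\pi) : \text{two distinct infinite geodesics from }\mathbf{0}\text{ have direction }\theta\}$. If $\gamma \ne \gamma'$ are two such geodesics for the same $\theta$, then past their last common vertex they split into two disjoint infinite planar arms; by planarity of $\mathbb{Z}^2$ these arms enclose an infinite region $L_\theta$, and distinct $\theta$ produce essentially disjoint $L_\theta$'s. Since at most countably many pairwise disjoint unbounded regions fit in $\mathbb{Z}^2$, the set $D(\omega)$ is countable almost surely. Because $\nu$ is continuous, $\nu(D(\omega)) = 0$ almost surely, and Fubini gives the theorem.

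The main obstacle is the straightness estimate, and it is precisely this step that forces the theorem to be stated conditionally on $(A_1)$ and $(A_2)$. Without uniform positive curvature the convexity gap can vanish and geodesics can in principle wander arbitrarily far laterally while still converging to a limit direction; without exponential moments the union bound over detour vertices cannot be closed at the required scale. Neither hypothesis is known to hold for any non-trivial i.i.d.\ passage time distribution on $\mathbb{Z}^2$, which is what motivates the percolation-cone route developed in the present paper.
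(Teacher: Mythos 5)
This statement is not proved in the paper at all: it is Theorem~2.1 of Newman \cite{N94}, quoted verbatim as background to motivate why the present paper works inside the percolation cone instead. So there is no in-paper proof to compare against; your proposal has to be judged against Newman's original argument. Your existence half is essentially that argument, correctly sketched: uniform positive curvature turns the triangle inequality for the shape norm $g$ into a quantitative gap of order $\alpha^2\|\v\|_1$ for an angular detour $\alpha$, Kesten-type concentration (this is where the exponential moments enter) beats the union bound over detour vertices, and a diagonal extraction from finite geodesics to $\v_n$ with $\arg(\v_n/\|\v_n\|_1)\to\theta$ produces the directed infinite geodesic. One imprecision: the straightness estimate cannot confine the \emph{entire} geodesic to a cone of half-width $\delta_{\|\v\|_1}\to 0$ about the $\mathbf{0}$--$\v$ ray, since near the origin a lattice path deviates by $O(1)$ in position and hence by a macroscopic angle; the correct statement bounds the angular deviation of a point $\u$ on the geodesic by a quantity going to $0$ in $\|\u\|_1$, and it is this version you need when passing to the limit path.

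The genuine gap is in the uniqueness half. You deduce countability of $D(\omega)$ from the $L_\theta$'s being ``essentially disjoint,'' but almost-disjointness (pairwise finite intersections) of infinite subsets of the countable set $\mathbb{Z}^2$ does \emph{not} imply countability of the family: almost-disjoint families of cardinality continuum exist (e.g.\ branches of the binary tree). To close the argument you must upgrade ``essentially disjoint'' to a genuinely countable tagging. The standard route uses that, by continuity of the passage-time distribution, all geodesics from the origin form a planar tree $T$; two $\theta$-geodesics that differ must branch at some vertex $v$ into distinct children $u\neq u'$, planarity forces the ends of the disjoint subtrees $T_u$, $T_{u'}$ to occupy consecutive arcs in the circular order at infinity, the direction map is weakly monotone along that order, and hence the direction sets of two sibling subtrees meet in at most one point. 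This makes $\theta\mapsto(v,u,u')$ injective on $D(\omega)$ and gives countability, after which $\nu(D(\omega))=0$ is immediate from atomlessness of $\nu$ (no Fubini is needed; Fubini is what Licea--Newman use for the separate \emph{coalescence} statement, where the exceptional set is deterministic and only Lebesgue-null). Without the tree structure your phrase ``last common vertex'' is also not obviously well defined, since two distinct geodesics could a priori share infinitely many vertices; continuity of the marginals is what rules this out, and your sketch never invokes it.
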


We note that, in order to show existence of infinite geodesic even in one direction,
the above theorem requires a global curvature assumption, establishment of which remains 
a major challenge. On the other hand, 
Haggstrom and Meester \cite{HM95} proved that, any compact convex set symmetric about the origin  with non-empty interior, can be realized as
the limiting shape for an FPP model associated to stationary
passage times.

Later Damron and Hanson \cite{DH14} weaken these assumptions by
replacing the global curvature assumption with a directional derivative assumption
and able to prove similar results. 
In order to describe their assumption we introduce some notation.
For $\theta \in [0, 2\pi)$ let $v_\theta$ be the unique point on $\delta({\cal B})$, 
the boundary of ${\cal B}$, with argument $\theta$. A supporting line $L$ for ${\cal B}$ at $v_\theta$ is a line 
that touches ${\cal B}$ at $v_\theta$ and ${\cal B}$ stays on one side of $L$. Under the assumption that
$\delta({\cal B})$ is differentiable at $v_\theta$,
 the supporting line $L$ for ${\cal B}$ at $v_\theta$ is unique.
$I_\theta$, the interval of angles, is defined   as $I_\theta := \{\theta^\prime : v_{\theta^\prime} \in L \}$.
Following the notation of \cite{DH14}, the point $v_\theta$ is called 
an \textit{exposed point of differentiability} if $I_\theta  = \{v_\theta\}$.

For i.i.d. passage time distributions, under a finite first moment assumption, Damron and Hanson proved the following (Theorem 1.1 of \cite{DH14}):
%
%
\begin{theorem}
\label{thm:DHExistenceDirectedGeodesic}
If $\delta({\cal B})$ is differentiable at $v_\theta$, then almost surely starting from the origin, there exists an infinite geodesic asymptotically directed in $I_\theta$. 
\end{theorem}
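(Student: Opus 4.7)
The plan is to construct the candidate infinite geodesic as a subsequential limit of finite geodesics with endpoints receding to infinity in direction $\theta$, and then use the shape theorem together with the differentiability of $\delta(\mathcal{B})$ at $v_\theta$ to force the asymptotic direction of the limit into $I_\theta$. Fix a lattice sequence $\{\z_n\}$ with $\|\z_n\|_1 \to \infty$ and $\text{arg}(\z_n/\|\z_n\|_1) \to \theta$, and for each $n$ let $\gamma_n$ be a (measurably chosen) geodesic from $\mathbf{0}$ to $\z_n$. Since passage times are strictly positive each $\gamma_n$ is simple, and since the ball of $\ell^1$-radius $k$ around the origin is finite, a diagonal extraction produces a subsequence along which $\gamma_n(k)$ stabilises to a vertex $\gamma(k)$ for every $k \in \mathbb{N}$. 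The resulting path $\gamma$ starts at $\mathbf{0}$, and because for each $k$ the initial segment $\gamma(0),\ldots,\gamma(k)$ coincides with a subpath of the geodesic $\gamma_n$ for all large $n$ in the extracted subsequence, it is itself a geodesic; hence $\gamma$ is an infinite geodesic.

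To constrain its asymptotic direction, let $g$ be the norm on $\mathbb{R}^2$ whose unit ball is $\mathcal{B}$, and let $\phi$ be the linear functional normalised so that $\phi(v_\theta) = 1$ and $\phi \leq 1$ on $\mathcal{B}$; by differentiability of $\delta(\mathcal{B})$ at $v_\theta$, $\phi$ is unique. Convexity gives $g(\w) \geq \phi(\w)$ for all $\w$, with equality exactly on the cone spanned by $\{v_{\theta'} : \theta' \in I_\theta\}$, and continuity plus positive homogeneity upgrade this to a uniform gap $g(\w) \geq (1+\eta_K)\phi(\w)$ for $\w/\|\w\|_1 \in K$, on any compact set $K$ of unit vectors disjoint from this cone. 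Suppose, for contradiction, that $\gamma$ has a directional limit point $\theta' \notin I_\theta$, so that along some $k_m \to \infty$ the vertices $\w_m := \gamma(k_m)$ satisfy $\w_m/\|\w_m\|_1 \to (\cos\theta',\sin\theta')$. Writing $\gamma[0,k_m]$ as the initial segment of $\gamma_{n_m}$ (with $n_m \to \infty$ since $\|\w_m\|_1 \to \infty$) and invoking additivity of passage times along a geodesic yields
\begin{equation*}
\tau(\mathbf{0},\z_{n_m}) \;=\; \tau(\mathbf{0},\w_m) + \tau(\w_m,\z_{n_m}).
\end{equation*}
Applying the shape theorem to each term, together with the gap estimate $g(\w_m) \geq (1+\eta)\phi(\w_m)$ and the linearity of $\phi$, would give $\tau(\mathbf{0},\z_{n_m}) \geq \phi(\z_{n_m}) + \eta\,\phi(\w_m) + o(\|\z_{n_m}\|_1)$, contradicting $\tau(\mathbf{0},\z_{n_m}) \sim g(\z_{n_m}) = \phi(\z_{n_m})$ (valid because $\theta \in I_\theta$), since $\phi(\w_m)$ is of order $\|\z_{n_m}\|_1$ along this subsequence.

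The real obstacle lurks in \emph{applying the shape theorem to each term}: the second summand $\tau(\w_m,\z_{n_m})$ has both endpoints varying with the passage time field itself, while the usual almost sure shape theorem is formulated for fixed deterministic sequences. To close this gap one either proves a uniform (endpoint-independent) shape theorem via concentration, or else bypasses it using Busemann-type functionals. I would take the latter route, following Damron--Hanson in spirit: study the differences $B_n(\x,\y) := \tau(\x,\z_n) - \tau(\y,\z_n)$ along the direction $\theta$, show that their limits (in a suitable Cesàro or subsequential sense) exist and agree with $\phi(\y - \x)$ up to negligible error, and use this identification to convert asymptotic passage time comparisons into genuinely directional statements along the random sequence $\w_m$. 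The uniqueness of the supporting functional $\phi$ at $v_\theta$ granted by the differentiability hypothesis is exactly what makes the Busemann limit single-valued, and controlling these limits without any curvature assumption is the technical heart of the argument.
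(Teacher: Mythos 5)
You should first be aware that the paper does not prove this statement at all: Theorem \ref{thm:DHExistenceDirectedGeodesic} is quoted verbatim as Theorem 1.1 of Damron--Hanson \cite{DH14} and is used as an external input, so the only meaningful comparison is with the argument of \cite{DH14} itself. Measured against that, the first half of your proposal (existence of \emph{some} infinite geodesic from the origin by a diagonal extraction from finite geodesics $\gamma_n$ to $\z_n$, using finiteness of $\ell^1$-balls and the fact that initial segments of geodesics are geodesics) is correct and standard, and your diagnosis of the obstacle --- that the shape theorem cannot be applied to $\tau(\w_m,\z_{n_m})$ because $\w_m$ is a random, configuration-dependent sequence --- is exactly the right one.

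The genuine gap is that the proposal stops precisely where the content of the theorem begins. You propose to ``study the differences $B_n(\x,\y)=\tau(\x,\z_n)-\tau(\y,\z_n)$ \dots show that their limits exist and agree with $\phi(\y-\x)$,'' but almost-sure existence of these Busemann limits is itself an open problem at this level of generality; it is not a lemma one can invoke. What \cite{DH14} actually do is substantially more involved: they take weak subsequential limits of Ces\`aro averages of the pre-Busemann increments on an enlarged configuration space, obtaining a translation-covariant, additive Busemann \emph{process} $B(\x,\y)$ together with reconstructed geodesic graphs, prove shift-ergodicity, and only then establish the asymptotics $\lim_n B(\mathbf{0},n\x)/n=\phi(\x)$ via the ergodic theorem combined with the shape theorem --- this identification is the sole place where differentiability of $\delta({\cal B})$ at $v_\theta$ enters, and it is what forces the reconstructed geodesics to be asymptotically directed in $I_\theta$. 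Without constructing this object, your contradiction argument in the second paragraph cannot be run: the inequality $\tau(\mathbf{0},\z_{n_m})\geq \phi(\z_{n_m})+\eta\,\phi(\w_m)+o(\|\z_{n_m}\|_1)$ rests on applying the shape theorem along the random sequence $(\w_m,\z_{n_m})$, which is exactly the step you correctly flagged as illegitimate. A secondary issue: you must also rule out that $\phi(\w_m)=o(\|\z_{n_m}\|_1)$, i.e.\ that the excursion out of the cone happens on a scale negligible compared to $\|\z_{n_m}\|_1$, which requires an additional argument. In short, the proposal is a correct road map with the technical heart --- the construction and identification of the Busemann process --- left as a black box.
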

This means that the limit in (\ref{eq:GeodesicDirection}) exists and it belongs to $I_\theta$.
According to the above theorem, in order to have infinite geodesics along 
the direction $\theta$, the point $v_\theta$ has to be an exposed point of differentiability. It is natural to ask whether there exists any i.i.d. passage time distributions whose limiting shape has exposed point(s) of differentiability on the boundary. Below we describe the only proven example as given in \cite{DH14}:
 
On the lattice $(\mathbb{Z}^2, {\cal E}^2)$, let ${\mathbb P}_p$ be a product probability measure of i.i.d. passage times with the common distribution function $F$ satisfying 
\begin{align}
\label{def:DurettLiggettClass}
\inf \text{supp} F = 1 \text{ and } F(1) = p \geq p^{\rightarrow}_{c},
\end{align} 
where $p^{\rightarrow}_{c}$ is the critical value for oriented percolation on $\mathbb{Z}^2$.
These are called Durrett-Liggett class of measures. For this class of measures, Durrett and
Liggett \cite{DL81} observed that the limiting shape
has a `flat edge' which was later completely characterized by Marchand \cite{M05}. 
For $p > p^{\rightarrow}_{c}$, let $\alpha_p (>0)$ denote the asymptotic speed of 
the rightmost infinite open path for a supercritical oriented percolation with parameter $p$ (see \cite{D84}). Consider the two points $M_p, N_p \in \mathbb{R}^2$ where $M_p := ( 1/2 + \alpha_p/\sqrt{2} , 1/2 - \alpha_p/\sqrt{2} )$ 
and $N_p := ( 1/2 - \alpha_p/\sqrt{2} , 1/2 + \alpha_p/\sqrt{2} )$.
The line segment obtained by joining these two points is denoted by $[M_p, N_p]$. 
The following theorem due to Marchand
characterizes the `flat edge' completely (Theorem 1.3 of \cite{M05}).

\begin{theorem}
\label{thm:FlatEdge}
Let $F$ be a distribution on $\mathbb{R}^+$ satisfying (\ref{def:DurettLiggettClass}) and let ${\cal B}$ be the corresponding limit shape.
\begin{enumerate}
\item ${\cal B} \subseteq \{\x \in \mathbb{R}^2 : ||\x||_1 \leq 1\}$.
\item If $p > p^{\rightarrow}_{c}$, then ${\cal B} \cap \{\x \in \mathbb{R}_+ \times \mathbb{R}_+ : ||\x||_1 = 1\}
 = [M_p , N_p ]$ and the segment
$[M_p , N_p ]$ is called the flat edge of the limiting shape ${\cal B}$.

\item If $p = p^{\rightarrow}_{c}$, then ${\cal B} \cap \{\x \in \mathbb{R}_+ \times \mathbb{R}_+ : ||\x||_1 = 1\} = {(1/2, 1/2)}$.
\end{enumerate}
\end{theorem}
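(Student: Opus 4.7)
First I would dispatch Part 1 using only the support condition: since $t(e) \geq 1$ almost surely and any lattice path from $\mathbf{0}$ to $\x$ uses at least $||\x||_1$ edges, one has $\tau(\mathbf{0}, \x) \geq ||\x||_1$, so $\x \in B(t)$ forces $||\x||_1 \leq t$, and the shape theorem yields ${\cal B} \subseteq \{\x : ||\x||_1 \leq 1\}$.

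For Part 2, my approach would be to couple the FPP to oriented Bernoulli percolation by declaring an edge \emph{open} when $t(e) = 1$, which happens independently with probability $p$. On $\mathbb{Z}_+^2$, any monotone (up-right) path of open edges from $\mathbf{0}$ to $\x$ has passage time exactly $||\x||_1$; after a $-\pi/4$ rotation such paths are precisely the open paths of standard oriented bond percolation, whose rightmost open path at level $n$ lies near $\alpha_p n$ on the event of survival, and whose leftmost path lies near $-\alpha_p n$ by the diagonal symmetry of the lattice. The first step would be to establish, via a restart and block-decomposition argument for supercritical oriented percolation, that for every $\x$ with $||\x||_1 = 1$ strictly inside $(M_p, N_p)$ an oriented open path from $\mathbf{0}$ to the lattice approximation of $n \x$ exists for all large $n$ with probability tending to one. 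This forces $\tau(\mathbf{0}, n\x)/n \to ||\x||_1$, hence $\x \in {\cal B}$, and by closedness $[M_p, N_p]$ is contained in ${\cal B} \cap \{||\x||_1 = 1\}$.

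The reverse inclusion, namely that the time constant $\mu(\x) := \lim_n \tau(\mathbf{0}, n\x)/n$ strictly exceeds $||\x||_1$ for $\x$ with $||\x||_1 = 1$ lying outside $[M_p, N_p]$, is the hard part and I expect it to be the main obstacle. My plan is a combined large-deviations and renormalization scheme: first use a classical large-deviation estimate for the shape of oriented percolation (controlled by $\alpha_p$) to show that for $\x$ outside the cone, the probability of any oriented open path from $\mathbf{0}$ to $n\x$ decays exponentially in $n$. Then partition the rectangle $[0, n\x(1)] \times [0, n\x(2)]$ into mesoscopic $L \times L$ blocks and show that a positive density of these blocks admit no monotone open crossing in the required direction; any geodesic from $\mathbf{0}$ to $n\x$ must traverse each such ``bad'' block, and would thus incur at least one edge of passage time strictly greater than $1$. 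A quantitative control of the expected excess cost per bad block, combined with a concentration estimate across the $\Theta(n)$ bad blocks encountered, should give $\tau(\mathbf{0}, n\x) \geq n||\x||_1 + c(\x) n$ for some $c(\x) > 0$, yielding $\mu(\x) > ||\x||_1$ and $\x \notin {\cal B}$. Part 3 would follow along the same lines using $\alpha_{p_c^{\rightarrow}} = 0$ so the cone degenerates to the single diagonal point $(1/2,1/2)$: the upper inclusion reduces to showing $\tau(\mathbf{0},(n,n))/(2n) \to 1$ along the diagonal (which holds even at criticality because arbitrarily long finite open oriented paths from $\mathbf{0}$ survive with probability tending to one at large scales), while the lower inclusion follows from the same exponential-decay mechanism, handled with extra care at criticality where the block estimates must be quantified using near-critical scaling rather than a positive mass gap.
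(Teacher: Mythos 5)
First, a point of orientation: the paper does not prove this statement. It is Marchand's theorem (Theorem 1.3 of \cite{M05}), quoted verbatim as background, so there is no internal proof to compare yours against; what follows is an assessment of your sketch on its own terms. Your Part 1 is correct and complete as stated: $\inf \mathrm{supp}\, F = 1$ forces $\tau(\mathbf{0},\x) \geq ||\x||_1$ up to the half-unit rounding, and the shape theorem does the rest. The inclusion $[M_p, N_p] \subseteq {\cal B} \cap \{||\x||_1 = 1\}$ is also sound in outline: oriented open paths have passage time equal to the $\ell^1$-distance, Durrett's description of the supercritical oriented cluster reaches every direction strictly inside the percolation cone, a restart argument handles non-survival of the origin, and closedness of ${\cal B}$ picks up the endpoints. (One only needs the event $\{\tau(\mathbf{0},n\x) = n||\x||_1\}$ to have probability bounded away from $0$, since the subadditive limit is a.s.\ constant.)

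The reverse inclusion and the critical case, which are the actual content of Marchand's theorem, have genuine gaps as sketched. (a) Geodesics are not oriented paths, so the assertion that a geodesic from $\mathbf{0}$ to $n\x$ ``must traverse each bad block'' in the required monotone direction is unjustified; you need the standard dichotomy that a path with a positive density of backward steps already pays an excess of order $n$ (each backtrack costs at least two extra edges of weight $\geq 1$), and only for almost-monotone paths can the block-crossing argument be invoked. (b) Condition (\ref{def:DurettLiggettClass}) permits $F$ to charge $(1,1+\epsilon)$ for every $\epsilon > 0$, so ``at least one edge of passage time strictly greater than $1$'' yields no uniform excess cost per bad block; one must instead declare edges with $t(e) \leq 1+\epsilon$ open, invoke continuity of $q \mapsto \alpha_q$ to keep the chosen direction outside the enlarged percolation cone, and only then count edges of weight at least $1+\epsilon$. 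Your sketch does not contain this thickening step, and without it the ``quantitative control of the expected excess cost'' has no lower bound to work with. (c) At $p = p^{\rightarrow}_{c}$ the oriented cluster of the origin is almost surely finite (Bezuidenhout--Grimmett, \cite{BG90}), so ``arbitrarily long finite open oriented paths survive with probability tending to one at large scales'' is not an argument that $\tau(\mathbf{0},(n,n))/(2n) \to 1$; this direction genuinely requires something like continuity of the time constant in the edge-weight distribution (Cox--Kesten) or a separate construction. Likewise, replacing ``a positive mass gap'' by ``near-critical scaling'' for the exponential-decay input at criticality is a placeholder, not a proof. In short, the easy half of the theorem is fine, but the hard half as written would not go through.
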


 Auffinger and Damron \cite{AD13} proved that for $p > p^{\rightarrow}_{c}$, the boundary $\delta({\cal B})$ is differentiable at both the points $M_p$ and $N_p$ whereas for $p = p^{\rightarrow}_{c}$, it is differentiable at $(1/2,1/2)$. 
This, together with the above theorem, prove that the point $(1/2,1/2)$  
   is an exposed point of differentiability for i.i.d. passage times satisfying (\ref{def:DurettLiggettClass}) with $p = p^{\rightarrow}_{c}$. For FPP with i.i.d. passage time distributions, this is the only proven example for having an 
    exposed point of differentiability on the boundary of the limiting shape.
    Let $\theta^-_p, \theta^+_p$ be the angles made by the points $M_p$ and $N_p$ at the origin respectively. The angular sector $(\theta^-_p, \theta^+_p) = \bigl(\tanh(\frac{ 1/2 - \alpha_p/\sqrt{2} }{  1/2 + \alpha_p/\sqrt{2} }), \tanh(\frac{ 1/2 - \alpha_p/\sqrt{2} }{  1/2 + \alpha_p/\sqrt{2} })\bigr)$ known as `percolation cone'.
Theorem \ref{thm:FlatEdge}, also gives us that
 for $p > p^{\rightarrow}_{c}$ there is \textit{no} exposed point of differentiability
$v_\theta$ with argument $\theta$ for $\theta \in (\theta^-_p, \theta^+_p)$. In this case, clearly the limit shape does not have uniform positive curvature. We mention here that
 for $p > p^{\rightarrow}_{c}$, differentiability at both the points $M_p$ and $N_p$ 
proves that the limiting shape is non-polygonal.

The above discussion shows that for i.i.d. passage times satisfying (\ref{def:DurettLiggettClass}), Theorem \ref{thm:DHExistenceDirectedGeodesic} gives us the following:
\begin{itemize}
\item[(i)] for $p= p^\rightarrow_{c}$, there exists 
an infinite geodesic starting from the origin along the direction $\pi/4$ and
\item[(ii)] for $p > p^\rightarrow_{c}$, there exists 
an infinite geodesic starting from the origin asymptotically 
directed in $[\theta^-_p, \theta^+_p]$.
\end{itemize}
For simplicity of notation, in both (i) and (ii) and in what follows, we talk about 
existence of infinite geodesics in the positive quadrant only. Since the limiting shape 
is symmetric with respect to the co-ordinate axes,
similar results hold for the other three quadrants also.
 
We consider  FPP with i.i.d. passage time distributions satisfying (\ref{def:DurettLiggettClass}) with $p > p^\rightarrow_{c}$. One of the main results of this paper is that, for any direction within the percolation cone, starting from each lattice point there exists an infinite geodesic along that direction.
\begin{theorem}
\label{thm:DirectedGeodesicsford2}
Let $F$ be a distribution on $\mathbb{R}^+$ satisfying (\ref{def:DurettLiggettClass}) with $p>p^{\rightarrow}_{c}$.
Then for any $\theta \in [\theta^-_p, \theta^+_p]$,
there exists an infinite geodesic starting from the origin with asymptotic direction $\theta$ almost surely.
\end{theorem}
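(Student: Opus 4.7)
The proof should exploit the fact that the Durrett--Liggett hypothesis $F(1)=p>p_c^\rightarrow$ with $\inf\text{supp}\,F=1$ makes the subgraph of edges $e$ with $t(e)=1$ (the ``fast'' edges) into a supercritical NE-oriented percolation. The key observation is that any oriented (monotone) lattice path from $\x$ to $\y$ consisting entirely of fast edges uses exactly $\|\y-\x\|_1$ edges and has passage time $\|\y-\x\|_1$, which is optimal because any path has at least $\|\y-\x\|_1$ edges each with $t(e)\geq 1$. Hence any infinite oriented open path is automatically an infinite geodesic in its asymptotic direction.

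The main plan is therefore to produce, for each $\theta\in(\theta_p^-,\theta_p^+)$, an infinite oriented open path in direction $\theta$ rooted (after a finite initial detour if necessary) at $\mathbf{0}$. For vertices $\v$ in the infinite oriented open cluster $\mathcal{C}$ I would first show that almost surely there exists such a path from $\v$ in every direction $\theta$ in the open cone: the rightmost and leftmost infinite open paths from $\v$ have asymptotic directions $\theta_p^+$ and $\theta_p^-$ respectively (by definition of the edge speed $\alpha_p$ appearing in Marchand's theorem), and standard interpolation among infinite open paths fills in the intermediate directions. Since $\mathcal{C}$ has positive density by supercriticality and ergodicity, almost surely one can find a vertex $\v^*\in\mathcal{C}$ close to $\mathbf{0}$; patching a geodesic from $\mathbf{0}$ to $\v^*$ with the infinite oriented open path from $\v^*$ in direction $\theta$ produces the candidate infinite path.

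The genuine difficulty is showing that this patched path is a geodesic, equivalently, that $\tau(\mathbf{0},\w)=\tau(\mathbf{0},\v^*)+\tau(\v^*,\w)$ for every $\w$ on the infinite tail (a direct route from $\mathbf{0}$ to $\w$ must not shortcut past $\v^*$). A cleaner alternative is a subsequential-limit construction: for $\u_n$ on the infinite oriented path with $\|\u_n\|_1=n$, the geodesic $\gamma_n$ from $\mathbf{0}$ to $\u_n$ has length at most $n+C$ with $C=\tau(\mathbf{0},\v^*)-\|\v^*\|_1$ independent of $n$, so a diagonal extraction yields an infinite path $\gamma$ whose every finite prefix is a geodesic. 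The delicate step is then verifying that $\gamma$ has asymptotic direction exactly $\theta$ rather than merely some direction in the cone: this should require quantitative control forcing each $\gamma_n$ to stay within $o(n)$ of the segment from $\mathbf{0}$ to $\u_n$ at every intermediate level, which I expect to extract from the oriented-percolation structure combined with the bounded excess length $C$. To reach the full statement for every $\theta$ simultaneously (including the boundary $\theta=\theta_p^\pm$), I would first execute the above for a countable dense set of interior directions and then use compactness/monotonicity of geodesics from $\mathbf{0}$ to handle the remaining directions.
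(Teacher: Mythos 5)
Your first step (fast edges form a supercritical oriented percolation and every oriented open path is a geodesic) is exactly the paper's starting point, and your plan to interpolate between the leftmost and rightmost infinite open paths to realize every direction in the cone is the content of the paper's Proposition \ref{prop:DirectedPathforOPd2}. But be aware that this interpolation is not ``standard'': the paper has to build a one-parameter family of $q$-walks on the backbone, prove a regeneration structure with exponential tails (Lemma \ref{lem:SingleRegIIDExpTail}), deduce that each $q$-path has the deterministic direction $\mathbb{E}(Y_1)/\mathbb{E}(T_1)$, and then prove that $q\mapsto\theta(q)$ is monotone \emph{and continuous} so that the intermediate value theorem applies. That continuity argument is a genuine piece of work, not something you can wave through.

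The real gap is in your treatment of a non-percolating origin. You correctly identify that naive patching at a nearby cluster vertex $\v^*$ need not produce a geodesic, but your fallback --- extracting a subsequential limit of point-to-point geodesics $\gamma_n$ from $\mathbf{0}$ to points $\u_n$ on the $\theta$-directed open path --- produces an infinite geodesic with \emph{no control on its asymptotic direction}. The bounded excess $C$ only bounds the total number of backtracking steps; it does not prevent the initial segments of $\gamma_n$ from hooking onto the oriented cluster in, say, direction $\theta_p^-$ before swinging over to reach $\u_n$, in which case the limit path has the wrong direction. The ``$o(n)$ localization around the segment from $\mathbf{0}$ to $\u_n$'' that you hope to extract is precisely the kind of statement that curvature assumptions are designed to supply and that is unavailable (indeed false in spirit) on the flat edge, where geodesics between points in the cone are massively non-unique. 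The paper's resolution is different and is the key missing idea: it locates the nearest \emph{bi-directional} percolation points $(j_r,-j_r)$ and $(j_l,-j_l)$ on the anti-diagonal through the origin, uses the coalescence result (Proposition \ref{prop:CoalesceQpathd2}, proved via a joint-regeneration Markov chain and a Lyapunov/supermartingale argument) for both the oriented and anti-oriented $q$-paths from these points, and thereby traps $\mathbf{0}$ inside a bounded region $\Delta$ whose boundary consists of bi-infinite geodesics. A planarity/non-crossing argument then shows a finite geodesic from $\mathbf{0}$ to the coalescence point can be taken inside $\Delta$, and that its concatenation with the $\theta$-directed infinite open path is a geodesic; the direction is preserved because only finitely many edges are modified. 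Without the bi-directional points and the enclosure argument (or some substitute), your proof does not close.
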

As discussed earlier, this result does not follow from the 
earlier works. It is useful to mention that we have no assumptions on the limiting 
shape and we do not require any moment assumptions for the passage time distribution.
Since we are working with atomic passage time distributions, 
these directed geodesics need not be unique.

The next set of results deal with coalescence of infinite directed geodesics. 
Before stating our result about coalescence we discuss 
some previous results. For FPP on $\mathbb{Z}^2$ under the assumptions ($A_1$) and ($A_2$),
 Licea and Newman \cite{LN96} proved that for any $\theta \in [0,2\pi)$, all $\theta$-directional geodesics almost surely coalesce except some deterministic Lebesgue null set $D$
 of $[0,2\pi)$. Since then it has been an open problem to show that the set $D$ can be taken to be empty. Zerner \cite{N97} proved that the set $D$ is at most countable. Later Damron and Hanson further improved it by showing the following (Theorem of \cite{DH14}):
\begin{theorem}
\label{thm:CoalGeoDamronHanson}
If $v_\theta \in \delta({\cal B})$ is an exposed point of differentiability, then almost surely
there exists a collection $\{\gamma_{\x} : \x \in \mathbb{Z}^2\}$ of infinite geodesics such that  the following are true:
\begin{enumerate}
 \item $\gamma_\x$ starts from $\x$ and has asymptotic direction $\theta$.
 \item For all $\x, \y \in \mathbb{Z}^2$, the infinite geodesics $\gamma_\x$ and $\gamma_\y$ coalesce.
 \item For each $\x \in \mathbb{Z}^2$ the set $\{\y : \x \in \gamma_\y\}$ is finite.
\end{enumerate}
\end{theorem}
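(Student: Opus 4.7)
The strategy is to construct a Busemann function associated to the direction $\theta$, use it to produce a coherent family of directed geodesics, and then read off coalescence and finiteness of the backward tree from the cocycle structure combined with translation invariance.

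For a lattice sequence $z_n$ with $z_n/\|z_n\|_1\to v_\theta/\|v_\theta\|_1$, I would begin with the finite-volume Busemann differences
\begin{equation*}
B_n(\x,\y):=\tau(\x,z_n)-\tau(\y,z_n),
\end{equation*}
which satisfy $|B_n(\x,\y)|\leq\tau(\x,\y)$ by the triangle inequality, so the family is tight. A subsequential distributional limit produces a random cocycle $B(\x,\y)$ with $B(\x,\y)+B(\y,\z)=B(\x,\z)$ and $B(\x,\y)\leq\tau(\x,\y)$. The decisive step is to upgrade this to a unique, almost sure limit: averaging over spatial translations yields a shift-covariant field whose mean increments along $e_1$ and $e_2$ are pinned to the one-sided directional derivatives of the time constant at $v_\theta$. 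The exposed point of differentiability hypothesis forces these one-sided derivatives to agree, so the gradient field is unique and the full limit exists almost surely. I expect this construction of Busemann functions under only the differentiability assumption to be the central technical obstacle.

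Given $B$, I would define $\gamma_\x$ to be the path starting at $\x$ that, at each vertex $\u$, moves to a neighbour $\v$ minimising $t(\u,\v)-B(\u,\v)$, with ties broken by a fixed deterministic rule. The cocycle identity together with $B\leq \tau$ forces every finite subpath to be a geodesic, and a convexity argument using the exposed point hypothesis pins the asymptotic direction of $\gamma_\x$ to $\theta$, giving property (1). For coalescence, I would argue by contradiction in the style of Licea--Newman: if $\gamma_{\mathbf{0}}$ and $\gamma_\y$ failed to coalesce with positive probability for some $\y$, then translation invariance of the Busemann field together with ergodicity of $\P$ would produce a positive density of pairwise disjoint $\theta$-directed geodesics. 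Planarity of $\Z^2$ would then constrain these geodesics to form a forest with a positive density of trivalent branching vertices, and a Burton--Keane type counting argument would rule this out, establishing property (2). Property (3) follows in the same spirit: if $\{\y:\x\in\gamma_\y\}$ were infinite with positive probability then, by translation invariance, confluence vertices of infinite in-degree would occur at positive density, contradicting the essential uniqueness of the coalescing tree produced in (2).
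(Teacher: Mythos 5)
First, a point of context: the paper does not prove this theorem at all. It is Damron and Hanson's result, quoted verbatim from \cite{DH14} as background motivation (the paper's own contributions are Theorems \ref{thm:DirectedGeodesicsford2}--\ref{thm:BiInfGeoesic}, proved by a completely different, percolation-based route). So there is no internal proof to compare against, and I measure your sketch against the argument actually given in \cite{DH14}.

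Your architecture is the correct one (Busemann-type cocycles, geodesics generated by the cocycle, a Licea--Newman style argument for coalescence), but there is a genuine gap at the step you yourself flag as the central obstacle: the upgrade from subsequential distributional limits of $B_n(\x,\y)=\tau(\x,z_n)-\tau(\y,z_n)$ to a unique almost sure limit. Knowing that every subsequential limit is a shift-covariant cocycle whose mean increments are pinned by the (unique, by differentiability) supporting line does not identify the \emph{law} of the field, let alone force almost sure convergence: two subsequential limits can share all their means and still differ, and almost sure existence of Busemann limits under a differentiability hypothesis alone is a well-known open problem, not a consequence of the exposed-point assumption. Damron and Hanson circumvent exactly this: they never prove the limit exists. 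They take weak subsequential limits of the joint law of the environment and the increment field on an enlarged probability space, develop the whole theory for an arbitrary such limit (cocycle property, $B\le\tau$, the recursion $B(\u,\cdot)=\min_{\v}\bigl(t(\langle\u,\v\rangle)+B(\v,\cdot)\bigr)$ that generates the geodesic graph, directedness in $I_\theta$ via the supporting line, coalescence via a modified Licea--Newman/Burton--Keane argument), and then observe that the resulting almost sure statements about geodesics are measurable with respect to the original $\sigma$-field and hence hold under $\P$. Your proof can be repaired by adopting this subsequential framework throughout, but as written the claimed uniqueness and a.s.\ convergence are unjustified. A smaller but real issue concerns property (3): a density or first-moment count cannot work, because by translation covariance $\E\bigl|\{\y:\mathbf{0}\in\gamma_\y\}\bigr|=\sum_{\y}\P(\mathbf{0}\in\gamma_\y)=\sum_{\z}\P(\z\in\gamma_{\mathbf{0}})=\E|\gamma_{\mathbf{0}}|=\infty$; the backward cluster is a.s.\ finite yet has infinite mean, so \cite{DH14} must (and does) argue via the absence of backward infinite paths in the directed geodesic graph rather than via a positive-density contradiction.
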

Recently based on differentiability assumption only, Ahlberg and Hoffman \cite{AH16} improved the results of \cite{DH14} partly. 
Now we describe our result for Durrett-Liggett class of measures.
\begin{theorem}
\label{thm:CoalGeoesic}
Let $F$ be a distribution on $\mathbb{R}^+$ satisfying (\ref{def:DurettLiggettClass}) with $p > p^{\rightarrow}_{c}$. Then almost surely for any $\theta \in [\theta^-_p, \theta^+_p]$,
there exists a collection $\{\gamma_{\x} : \x \in \mathbb{Z}^2\}$ of infinite geodesics such that  the following are true:
\begin{enumerate}
 \item $\gamma_\x$ starts from $\x$ and has asymptotic direction $\theta$.
 \item For all $\x, \y \in \mathbb{Z}^2$, the infinite geodesics $\gamma_\x$ and $\gamma_\y$ coalesce.
\end{enumerate}
\end{theorem}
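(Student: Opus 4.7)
The proof strategy is to convert the FPP coalescence question into a question about coalescing oriented open paths in supercritical oriented percolation. Declare an edge $e\in{\cal E}^2$ to be \emph{open} iff $t(e)=1$; since ${\mathbb P}_p(t(e)=1)=p>p^{\rightarrow}_{c}$, this defines a supercritical oriented bond percolation on $\mathbb{Z}^2$. Any finite oriented open path from $\x$ to $\y$ is a geodesic for ${\mathbb P}_p$, because its passage time equals the deterministic lower bound $||\x-\y||_1$ (valid since $t(e)\geq 1$ and any lattice path between these points has at least $||\x-\y||_1$ edges). Thus if for each lattice point $\x$ one can select an infinite oriented open path $\gamma_\x$ with asymptotic direction $\theta\in[\theta^-_p,\theta^+_p]$ so that the family $\{\gamma_\x\}_{\x\in\mathbb{Z}^2}$ coalesces, the theorem follows.

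For the extremal directions $\theta=\theta^+_p$ (symmetrically $\theta^-_p$), I would take $\gamma_\x$ to be the rightmost (resp.\ leftmost) infinite open oriented path starting from $\x$. Its asymptotic direction is $\theta^+_p$ by the very definition of $\alpha_p$. Coalescence of rightmost infinite open paths in supercritical oriented percolation is a classical fact established by planar/ergodic arguments: two rightmost paths cannot cross, and if they never met they would bound a strip forbidden by uniqueness of the infinite oriented cluster. Handling the finitely many lattice points not in the infinite oriented cluster is done by rerouting each such $\x$ to its nearest cluster vertex through a finite geodesic, which affects neither asymptotic direction nor coalescence.

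For an interior direction $\theta\in(\theta^-_p,\theta^+_p)$, a more subtle construction is needed, since two arbitrary infinite open oriented paths with the same interior asymptotic direction need not coalesce. The idea is to route every $\gamma_\x$ through a common coalescing skeleton. Let $\rho_\x$ denote the rightmost infinite open path from $\x$ and $\sigma$ the common coalesced tail of all the $\rho_\x$, whose existence follows from the second step. I would then set $\gamma_\x$ as follows: follow $\rho_\x$ until it merges into $\sigma$, follow $\sigma$ up to a translation-equivariantly chosen height $h(\x)$ (taken large enough for all $\x$ up to any given norm), and finally peel off $\sigma$ along a \emph{single} fixed infinite open oriented path with asymptotic direction $\theta$. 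Existence of such a peel-off path from any vertex of $\sigma$ is guaranteed by the asymptotic shape theorem for supercritical oriented percolation (\cite{D84}), which ensures that the oriented cluster fills the interior of the percolation cone. Because all $\gamma_\x$ share the same peel-off subpath after their respective merging onto $\sigma$, they coalesce; because the peel-off path has asymptotic direction $\theta$, so does each $\gamma_\x$.

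The main technical obstacle is the construction and verification of the peel-off path in the interior case: producing, from each vertex of $\sigma$, a translation-equivariantly selected infinite open oriented path with the \emph{exact} asymptotic direction $\theta$, and doing so simultaneously for all $\theta\in(\theta^-_p,\theta^+_p)$ on a single full-probability event. This requires fluctuation bounds for infinite open paths about their mean direction inside the supercritical oriented cluster, which I would derive from large deviation estimates in the spirit of \cite{D84}. Once this ingredient is in place, combining it with Theorem \ref{thm:DirectedGeodesicsford2} and the translation invariance of ${\mathbb P}_p$ yields both conclusions of Theorem \ref{thm:CoalGeoesic}, since the constructed $\gamma_\x$ are simultaneously infinite geodesics with asymptotic direction $\theta$ and coalesce pairwise.
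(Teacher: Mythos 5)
Your reduction to oriented percolation (open iff $t(e)=1$, every oriented open path is a geodesic) is exactly the paper's starting point, and your treatment of the extremal directions via rightmost/leftmost paths is in the right spirit. But the core of the theorem is the interior case, and there your construction has a genuine gap. You route every $\gamma_\x$ onto the coalesced skeleton $\sigma$ of the rightmost paths and then ``peel off along a \emph{single} fixed infinite open oriented path'' of direction $\theta$. The vertices $v_\x$ at which $\rho_\x$ merges into $\sigma$ are unbounded as $\x$ ranges over $\mathbb{Z}^2$, so no single peel-off vertex lies above all of them; an oriented path cannot travel backwards along $\sigma$ to reach a peel-off point below its merge point. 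If instead you let the peel-off vertex vary with $\x$ (your ``height $h(\x)$ large enough for all $\x$ up to a given norm''), then you need the $\theta$-directed peel-off paths emanating from \emph{different} vertices of $\sigma$ to coalesce with one another --- which is precisely the statement you set out to prove. The paper avoids this circularity entirely: it defines, for each $q\in[0,1]$, a \emph{local} translation-covariant selection rule (the $q$-walk, using auxiliary uniforms and the truncated longest-path functions $l_k$) producing from every percolation point a path of deterministic direction $\theta(q)$, shows $\theta(q)$ sweeps $[\theta^-_p,\theta^+_p]$ continuously (Proposition \ref{prop:DirectedPathforOPd2}), and proves coalescence of two $q$-paths by a supermartingale/Lyapunov argument on the $\ell^1$-distance Markov chain observed at joint regeneration times (Proposition \ref{prop:CoalesceQpathd2}, Lemma \ref{lem:QCoalesceTimeFinite}). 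Some such quantitative mechanism is unavoidable; ``fluctuation bounds in the spirit of \cite{D84}'' for a non-locally-defined path does not supply it.

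A second, smaller gap: the lattice points outside the infinite oriented cluster are not ``finitely many'' --- they have positive density --- and concatenating a finite geodesic from such an $\x$ to a cluster point with an infinite oriented path is not automatically a geodesic. The paper handles this by locating bi-directional percolation points to the left and right of $\x$ on its anti-diagonal, enclosing $\x$ in a region bounded by coalescing bi-infinite geodesics, and using a planar non-crossing argument to certify that the concatenated path is itself a geodesic. Your proposal needs this step (or an equivalent) to make the rerouting legitimate.
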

It is important to mention here that since the directed geodesics are not unique, 
the above theorem does not imply that all $\theta$-directional geodesics coalesce. For atomic passage time distribution, same comment holds true for Theorem \ref{thm:CoalGeoDamronHanson} also. We again mention that these results do not require any moment assumptions or assumptions about the limiting shape.

The last result of Theorem \ref{thm:CoalGeoDamronHanson} is related to existence of bi-infinite geodesics, a question apparently first
posed for lattice FPP by H. Furstenberg (see page 258 of \cite{K86}).
 Generally it is believed that bi-infinite geodesics do not exist.
 Till date, the best result known for lattice FPP with i.i.d. continuous passage times, is due to Damron and Hanson \cite{DH16} who proved that there is no bi-infinite geodesics with one end having a deterministic direction. Our next theorem shows that for Durrett-Liggett class of measures, bi-infinite geodesics do exist almost surely.
\begin{theorem}
\label{thm:BiInfGeoesic}
Let $F$ be a distribution on $\mathbb{R}^+$ satisfying (\ref{def:DurettLiggettClass}) with $p > p^{\rightarrow}_{c}$. Then bi-infinite geodesic exists almost surely.
\end{theorem}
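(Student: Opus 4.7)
The plan is to exploit the supercritical oriented percolation embedded in any Durrett-Liggett model with $p > p^{\rightarrow}_{c}$. Call an edge \emph{fast} if its passage time equals $1$; by (\ref{def:DurettLiggettClass}), fast edges form an i.i.d. Bernoulli($p$) configuration. Orienting every horizontal edge in the $+x$ direction and every vertical edge in the $+y$ direction, the fast edges constitute a supercritical oriented percolation configuration on $\mathbb{Z}^2$.

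The key observation is that every oriented fast path is a geodesic. Since $t(e) \geq 1$ always, one has $\tau(\x,\y) \geq \|\y-\x\|_1$ for all $\x,\y \in \mathbb{Z}^2$. An oriented fast path from $\x$ to $\y$ uses exactly $\|\y-\x\|_1$ edges, each of passage time $1$, and so realises this lower bound; in particular every bi-infinite oriented fast path is automatically a bi-infinite geodesic. The task therefore reduces to showing that bi-infinite oriented fast paths exist almost surely.

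To carry this out, I would let $A^+$ be the event that some infinite oriented fast path starts at $\mathbf{0}$ and $A^-$ the event that some infinite oriented fast path terminates at $\mathbf{0}$. Both have probability equal to the oriented percolation survival probability $\theta(p) > 0$, the statement about $A^-$ following from the symmetry $\x \mapsto -\x$. The event $A^+$ is determined by edges with both endpoints in the closed non-negative quadrant, while $A^-$ is determined by edges with both endpoints in the closed non-positive quadrant; since no edge has both endpoints equal to $\mathbf{0}$, these two edge sets are disjoint, so $A^+$ and $A^-$ are independent. Hence $\mathbf{0}$ lies on a bi-infinite oriented fast path with probability $\theta(p)^2 > 0$. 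The translation-invariant event ``some vertex of $\mathbb{Z}^2$ lies on a bi-infinite oriented fast path'' therefore has positive probability, and by ergodicity of the i.i.d. product measure under $\mathbb{Z}^2$-translations it has probability one. Any such path is a bi-infinite geodesic by the previous step.

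There is essentially no obstacle here: the bi-infinite geodesic question reduces, via the fast-edge observation, to the elementary fact that supercritical oriented percolation on $\mathbb{Z}^2$ admits bi-infinite oriented paths. The substantive content of the paper, namely the existence of directional geodesics throughout the percolation cone and their coalescence (Theorems~\ref{thm:DirectedGeodesicsford2} and \ref{thm:CoalGeoesic}), is not needed for Theorem~\ref{thm:BiInfGeoesic} itself.
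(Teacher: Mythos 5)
Your proof is correct and is essentially the paper's argument: the paper likewise reduces the theorem to the existence of bi-directional percolation points (points in ${\cal K}\cap{\cal K}^{\text{anti}}$), uses the same disjoint-quadrants independence computation to get probability ${\mathbb P}_p(B(\mathbf{0}))^2>0$ for the origin, and observes that concatenating the infinite oriented and anti-oriented open paths through such a point yields a bi-infinite geodesic because every oriented open path attains the $\ell^1$ lower bound on passage times. The only place the two arguments diverge is the upgrade from positive probability to almost-sure existence: you invoke ergodicity of the i.i.d. product measure under translations, whereas the paper obtains it as a by-product of a renewal scan along the line $y=-x$ that it needs anyway for the sandwiching construction in Theorem \ref{thm:DirectedGeodesicsford2}; both steps are valid.
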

In this regard it is worth pointing out that, recently Benjamini and Tessera \cite{BT16}
showed that bigeodesics may exist on graph with different geometry.

Since we are working with i.i.d. passage time distributions satisfying (\ref{def:DurettLiggettClass})  with $p>p^{\rightarrow}_{c}$, a natural copuling 
with supercritical oriented percolation exists (for more details 
see the beginning of next section) and 
every `oriented infinite open' path gives an infinite geodesic.
On the event that the origin is a percolation point, Theorem \ref{thm:DirectedGeodesicsford2} gives that for any direction in the percolation cone, there
exists an infinite oriented open path directed along that direction.
On the complement event, i.e., when origin is not a percolation point, we use bi-directional percolation points (see (\ref{def:BiPercPoint})) to create a bounded region around the origin
enclosed by geodesic paths. This construction followed by a non-crossing argument complete the proof. This non-crossing argument 
depends on planar structure of $\mathbb{Z}^2$. 

We mentioned earlier that for supercritical oriented percolation, using subadditive ergodic theorem, Durrett \cite{D84} proved that the rightmost infinite oriented open path starting from a percolation point has a deterministic direction almost surely.
Though it is possible to modify the arguments of \cite{D84} to obtain infinite oriented open path along any direction in the percolation cone,
we need to show existence of directed infinite geodesics for non-percolation points as well.  
In this paper, following \cite{BCDN13}, we provide a constructive proof for any direction inside the percolation cone. This involves a local construction to approximate an infinite oriented open path giving suitable stopping times.
These stopping times allow us in obtaining a Markovian structure through regenerations.
This Markovian structure together with the bi-directional percolation points, enable us to apply Lyapunov function technique to create a region enclosed by geodesic paths. 
This modification is required to work with non-percolation points. 

\section{Proof of the theorems}

We first couple our FPP model with an embedded oriented percolation model.
 For Durrett-Liggett class of measures, Marchand \cite{M05} first observed and used this coupling.
 
Let us first introduce the embedded oriented percolation model and some notation. 
An edge $e \in {\cal E}^2$ is said to be open if $t(e) = 1$, and closed otherwise. 
Each vertex $\x \in \mathbb{Z}^2$ has $2$ oriented edges and when these edges
 are open, they give access
to the vertices $\x + (1, 0)$ and $\x +  (0,1)$.
An oriented open path is a path consisting of oriented open edges only. The event 
$\{ \x \leadsto \y\}$ denotes that there is an oriented open path from $\x$ to $\y$. 
For $\x \in \mathbb{Z}^2$, let $B(\x)$ denote the event that $\x$ 
has an \textit{infinite} oriented open path. Clearly $\mathbb{P}_p(B(\x))$ does not depend on the vertex
$\x$ and it is well known that (see \cite{D84}, \cite{BG90})
\begin{align*}
\mathbb{P}_p(B(\mathbf{0})) = 
\begin{cases}
= 0 &\text{ for }p \leq p^\rightarrow_{c}\\
> 0 &\text{ for }p > p^\rightarrow_{c}.
\end{cases}
\end{align*}
 
For $\mathbb{P}_p(\cdot)$, as described in (\ref{def:DurettLiggettClass})
with $p > p^\rightarrow_{c}$, we prove the following proposition.
\begin{proposition}
\label{prop:DirectedPathforOPd2}
Conditional on the event $B(\mathbf{0})$, for any $\theta \in [\theta^-_p, \theta^+_p]$
 there exists an infinite oriented open path $\gamma$ starting from the origin along the direction $\theta$ almost surely. 
\end{proposition}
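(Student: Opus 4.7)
The plan is to dispatch the boundary directions by means of the rightmost and leftmost infinite oriented open paths from $\mathbf 0$, and then construct paths for the interior directions by a mesoscopic greedy scheme with a renewal structure in the spirit of \cite{BCDN13}. For $\theta = \theta^-_p$, on $B(\mathbf 0)$ the rightmost infinite oriented open path from $\mathbf 0$ is well-defined, and Durrett's subadditive argument \cite{D84} provides it with a deterministic asymptotic velocity of argument $\theta^-_p$; a symmetric statement for the leftmost path handles $\theta = \theta^+_p$.

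For $\theta \in (\theta^-_p, \theta^+_p)$, fix a mesoscopic scale $L$ and tolerance $\epsilon$ with $0 < \epsilon < \min(\theta-\theta^-_p,\, \theta^+_p-\theta)$. Starting from $v_0 := \mathbf 0$, inductively choose $v_{k+1}$ to be a vertex $w$ with $L \le \|w-v_k\|_1 \le 2L$, with an open oriented path $v_k \leadsto w$, with $B(w)$ holding, and with $|\arg(w-v_k) - \theta| < \epsilon$; break ties by a fixed measurable rule. Concatenating the finite open oriented subpaths yields an infinite open oriented path $\gamma$ whose mesoscopic increments are all directed within $\epsilon$ of $\theta$; an elementary complex-exponential calculation then shows $\gamma(n)/\|\gamma(n)\|_1$ accumulates in an arc of angular width $O(\epsilon)$ around $\theta$. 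Performing the construction in phases with $\epsilon_k \downarrow 0$ slowly enough to preserve success probabilities then pins the asymptotic direction exactly at $\theta$.

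The iteration is justified by a Markov/renewal property: conditional on $B(v_k)$, the oriented percolation in $v_k + \mathbb{Z}_{\ge 0}^2$ is an independent translate of the original conditioned on $B(\mathbf 0)$. Hence the success indicators at successive steps dominate an i.i.d.\ Bernoulli sequence with a positive parameter, so a Borel--Cantelli argument forces the scheme to persist forever almost surely on $B(\mathbf 0)$.

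The main obstacle is the underlying \emph{renewal lemma}: for every $\theta$ strictly inside the cone there exists $q = q(L,\epsilon) > 0$ such that, uniformly in $v$ with $B(v)$ holding, an admissible $v'$ at scale $L$ in direction $\theta \pm \epsilon$ exists with probability at least $q$. This encodes the fact that inside the supercritical oriented cluster, percolation points populate every interior angular direction with positive density at every scale. I would prove it by a block argument: exhibit a lattice path of length of order $L$ directed within $\epsilon$ of $\theta$, force it open with its endpoint a percolation point via Kuczek-type regenerations of the rightmost and leftmost infinite paths from $v$, and conclude from exponential tail estimates on regeneration lengths \cite{D84}.
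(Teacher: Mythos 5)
Your route is genuinely different from the paper's, but as written it has two real gaps. First, the fixed-scale greedy scheme cannot ``persist forever'': each step succeeds only with probability $q(L,\epsilon)<1$ (at a fixed scale $L$ there is positive probability that no reachable percolation point lies in the prescribed annulus-and-angle window), so the construction halts after a geometric number of steps almost surely. Your Borel--Cantelli argument gives infinitely many successes among independent trials, not success at \emph{every} step, which is what the concatenation needs; you would have to add a retry/escalation mechanism (e.g.\ search successively larger dyadic annuli in the same angular sector until an admissible point is found, with a geometric number of escalations). Second, and more seriously, the claimed renewal property is not justified: the event $\{v_{k+1}=w\}$ is determined by checking $B(w)$ (which examines edges arbitrarily far above $w$) and by the tie-breaking rule, i.e.\ by the \emph{failure} of other candidates to be admissible. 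Conditioning on such decreasing events cannot be pushed through FKG in the direction you need, so the configuration in $w+\mathbb{Z}^2_+$ given the history of the scheme is neither an independent translate nor stochastically bounded below in the way your domination claim requires. This is precisely the difficulty the paper's Lemma \ref{lem:SingleRegIIDExpTail} is built to overcome: its construction only ever reveals a finite window of edges plus the single increasing event that the current endpoint percolates, and the inequality (\ref{eq:FKGPercolating}) is proved through a careful filtration/stopping-time decomposition.

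For comparison, the paper does not steer towards $\theta$ at all. It builds, for each $q\in[0,1]$, a local ``$q$-walk'' along the cluster using auxiliary uniforms, extracts i.i.d.\ regeneration increments $(T_j-T_{j-1},Y_j)$ with exponential tails, and obtains a path with deterministic direction $\theta(q)=\arg\bigl(\mathbb{E}(Y_1)/\mathbb{E}(T_1)\bigr)$; since $q=1$ and $q=0$ give the rightmost and leftmost paths (directions $\theta^-_p$ and $\theta^+_p$), and $q\mapsto\theta(q)$ is monotone by coupling and continuous (via continuity of $q\mapsto\mathbb{E}(Y_1(q))$ with uniform tail control), the intermediate value theorem covers every $\theta$ in the cone. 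Your approach, if repaired, would additionally require the oriented-percolation shape theorem to establish the ``renewal lemma'' and a quantitative multi-scale argument ($L_k\to\infty$ as $\epsilon_k\downarrow 0$, with control on how fast $q(L_k,\epsilon_k)$ may degenerate) to pin the direction exactly at $\theta$; the paper's interpolation argument avoids all of this. I would encourage you to either adopt the paper's regeneration-plus-continuity scheme or rebuild your renewal step so that each stage conditions only on finitely many edges together with a single increasing percolation event.
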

This proposition will be proved through a sequence of lemmas.
We describe a ``local" construction to obtain
an infinite oriented open path $\gamma$.
 This construction is motivated from \cite{BCDN13} to run 
a symmetric random walk on the backbone of a supercritical oriented site percolation cluster.
We did necessary modifications to work with supercritical oriented bond percolation
and for $q \in [0,1]$, we run a `$q$-walk' with drift which depends on the probability $q$.
 Similar construction in more restrictive setting
was used in \cite{SS13}
 to analyse the rightmost infinite open paths.   
We mostly follow the notation used in \cite{BCDN13}.

Let $\{U_{(x,t)} : (x,t) \in \mathbb{Z}^2\}$ be a collection of i.i.d. $U(0,1)$
 random variables, independent of the collection $\{t(e) : e \in {\cal E}^2\}$ of i.i.d. passage times, that 
 we have started with. This collection helps us to create 
an additional randomness which is required.

 For $(x,t) \in \mathbb{Z}^2$, let $V(x,t) := \{(x+1,t),(x,t+1)\}$
be the set of the oriented neighbours of $(x,t)$.
For every $(x,t) \in \mathbb{Z}^2$,                                                                                                                                                                                                                                                                                                                                                                                                                                                                                                                                                                                                                                                                                                                                                                                                                                                                                                                                                                                                                                                                                                                                               let 
\begin{align*}
l(x,t) = l_\infty(x,t) := \sup\{ & k \geq 0 : (x,t) \leadsto (y,s) \text{ for some }(y,s) \in \mathbb{Z}^2 \text{ with }y \geq x, s \geq t\\
& \text{ and }||(x,t)-(y,s)||_1 = k\},
\end{align*}
be the length of the 
longest oriented open path starting at $(x,t)$. By definition, if both the oriented edges starting at $(x,t)$ are closed then $l(x,t) = 0$. 
For every $k\in \mathbb{N} $ let 
$l_k(x,t) := l(x,t)\wedge k$ be the length of the longest oriented open path
of length at most $k$ starting at $(x,t)$.
For $k \geq 1$ let 
\begin{align*}
M_k(x,t) := \{(y,s) \in V(x,t) : l_{k - 1}(y,s) + 1 = l_k(x,t)\text{ and } (x,t) \leadsto (y,s)\}.
\end{align*}
If all the oriented edges starting at $(x,t)$ are closed then 
$M_k(x,t) = \emptyset$ for all $k\geq 1$. 

Fix $q \in [0,1]$. 
On the event $\{M_k(x,t) \neq \emptyset \}$,
we define $m^q_k(x,t) \in M_k(x,t)$ as,
\begin{align*}
m^q_k(x,t) :=
\begin{cases}
(x+1,t) & \text{ if }M_k(x,t) = \{(x+1,t)\}\\
& \text{ or if }\# M_k(x,t) = 2 \text{ and }U_{(x,t)}\leq q\\
(x,t+1) & \text{ if }M_k(x,t) = \{(x,t+1)\}\\
& \text{ or if }\# M_k(x,t) = 2 \text{ and }U_{(x,t)}> q.
\end{cases}
\end{align*} 
It is important to observe that the point $m^q_k(x,t)$ is defined only when the set $M_k(x,t)$ is non-empty.

On the event $B(\mathbf{0})$, we define a path 
$\gamma_k = \gamma_k^{\mathbf{0}}(q)$ of length $k$ 
starting from the origin as 
\begin{align}
\begin{split}
\gamma_k(0) := \mathbf{0} \text{ and }
\gamma_k(j+1) := m^q_{k-j}(\gamma_{k}(j)), \text{ for }j=0,\ldots,k-1.
\end{split}
\end{align}
We first observe that conditional on the event $B(\mathbf{0})$,
 the set $M_k(\mathbf{0})$
is non-empty for all $k$ and hence the path $\gamma_k$ is well defined for all $k$.
The set of percolation points is denoted by ${\cal K}$.  
Set $(T_0, Y_0) :=  (0,\mathbf{0})$ and for $j \geq 1$ let 
\begin{align}
\label{def:RegenerationTimed2}
\begin{split}
T_j = T_j(q) &:= \inf\{k > T_{j-1} : \gamma_k(k) \in {\cal K}\}, \\
Y_j = Y_j(q) &:= \gamma_{T_j}(T_j) - \gamma_{T_{j-1}}(T_{j-1}).
\end{split}
\end{align}
It is not difficult to see that
$||Y_j||_1 \leq (T_j - T_{j-1})$ for all $j \geq 1$. Same argument as in Lemma 2.5 of \cite{BCDN13} gives us the following lemma, which 
shows that these steps are indeed regeneration steps.
We need to do the obvious modifications for defining appropriate filtration,
as here we are working with supercritical oriented bond percolation instead of site percolation.
For completeness we give the full details here:
\begin{lemma}
\label{lem:SingleRegIIDExpTail}
Conditional on the event $B(\mathbf{0})$,
the sequence $\{(T_j - T_{j-1},Y_j) : j \geq 1\}$ is i.i.d. and for all $n \geq 1$ we have 
\begin{equation}
\label{eq:SingleRegIIDExpTail}
\P_p(T_1 \geq n| B(\mathbf{0})) \leq C_1\exp{(-C_2 n)},
\end{equation}
where the constants $C_1,C_2 > 0$ depend only on $p$. 
\end{lemma}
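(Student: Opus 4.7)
The plan is to follow the proof of Lemma 2.5 of \cite{BCDN13}, adapting it from oriented site percolation to oriented bond percolation. The two ingredients are: an i.i.d.\ renewal structure obtained via a strong Markov argument, and an exponential tail bound on $T_1$ coming from classical supercritical oriented percolation estimates.

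For the i.i.d.\ structure, I would introduce a filtration $\{\mathcal{F}_n\}_{n \geq 0}$ so that $\mathcal{F}_n$ contains exactly the passage times $t(e)$ and $U$-variables $U_{(x,t)}$ that have been queried while constructing $\{\gamma_k : k \leq n\}$ and while testing whether the candidate endpoints $\gamma_k(k)$ lie in $\mathcal{K}$. The delicate modification relative to \cite{BCDN13} is that certifying $l(x,t) = \infty$ or $l(x,t) < \infty$ now requires revealing edges rather than vertex states, so one must use a minimal exploration procedure that does not peek into the forward cone at $\gamma_{T_j}(T_j)$ any more than strictly necessary. Once this is done, $T_j$ is a stopping time for $\{\mathcal{F}_n\}$, and the passage times and $U$-variables in the unrevealed part of the forward cone of $\gamma_{T_j}(T_j)$ remain i.i.d.\ with their original marginals. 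By translation invariance of $\mathbb{P}_p$ and the definition of $T_j$, conditional on $\mathcal{F}_{T_j}$ these unrevealed variables form a fresh copy of the initial configuration conditioned on $B(\mathbf{0})$. The strong Markov property then yields that $(T_j - T_{j-1}, Y_j)$ is independent of $\mathcal{F}_{T_{j-1}}$ and distributed as $(T_1, Y_1)$.

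For the tail bound, observe that each $\gamma_k$ is, by construction, an oriented open path of length $k$ from $\mathbf{0}$. The event $\{T_1 \geq n\}$ forces that for every $k \in \{1, \dots, n-1\}$ the endpoint $\gamma_k(k)$ is not a percolation point yet sits at the tip of an oriented open path of length $k$ from $\mathbf{0}$. Tracking how the finite prefixes of $\gamma_k$ stabilize as $k$ grows, this in turn forces the existence, at some vertex within a neighborhood of the origin of size polynomial in $n$, of a forward oriented open cluster that is finite but has diameter of order $n$. The probability that the forward cluster of a fixed vertex in supercritical oriented bond percolation is finite and has diameter at least $n$ decays exponentially in $n$; this is the bond-percolation analogue of classical estimates (\cite{D84, BG90}). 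A union bound over the polynomially many candidate vertices then yields $\mathbb{P}_p(T_1 \geq n \mid B(\mathbf{0})) \leq C_1 \exp(-C_2 n)$ with $C_1, C_2 > 0$ depending only on $p$.

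The main obstacle is the filtration bookkeeping: under bond percolation one must ensure that the exploration used to determine $l$-values and the membership of $\gamma_k(k)$ in $\mathcal{K}$ does not prematurely expose edges in the forward cone that the renewal step needs to keep fresh. Once this is arranged carefully, both ingredients reduce to well-known facts about supercritical oriented percolation, and the argument proceeds exactly as in \cite{BCDN13}.
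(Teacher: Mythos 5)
Your treatment of the i.i.d.\ renewal structure is in the same spirit as the paper's (which, like yours, defers the details to Lemma 2.5 of \cite{BCDN13} after setting up a filtration --- the paper uses the level sets $\mathcal{G}^m_n$ indexed by $\|\cdot\|_1$ rather than a minimal-exploration filtration, which makes the bookkeeping you worry about essentially automatic). The problem is your tail bound. The claimed implication --- that $\{T_1 \geq n\}$ forces some vertex near the origin to have a \emph{finite} forward cluster of diameter of order $n$ --- is false. The time $T_1$ can be large because the construction encounters \emph{many} consecutive non-percolating endpoints, each with a finite forward cluster of diameter $O(1)$: writing $\sigma_{k+1} = \sigma_k + l(\gamma_{\sigma_k}(\sigma_k)) + 1$ for the successive test times, one has $T_1 = \sigma_K$ where $K$ is the number of tests, and a configuration with $K \approx n$ and every $l(\gamma_{\sigma_k}(\sigma_k)) = O(1)$ is perfectly consistent with $T_1 \geq n$. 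No union bound over ``large finite clusters'' can control this scenario, so your argument proves nothing about it.

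The missing ingredient is precisely the probabilistic estimate the paper spends most of its proof on: the inequality $\tilde{\P}_p(B(W) \mid \mathcal{G}^{\sigma}_0) \geq \P_p(B(\mathbf{0}))$ for stopping times $\sigma$ and $\mathcal{G}^\sigma_0$-measurable endpoints $W$ with $\|W\|_1 = \sigma$, obtained by decomposing $B(\mathbf{0})$ over the set of vertices reached at level $n$ and applying the FKG inequality. Applied recursively to the test times $\sigma_k$, this shows each test succeeds with conditional probability at least $\P_p(B(\mathbf{0}))$ given the past, so $K$ is dominated by a geometric random variable; combined with the exponential tail of each increment $l(\gamma_{\sigma_k}(\sigma_k))$ (this is the one place where your finite-cluster estimate is the right tool), a geometric sum of exponentially-tailed variables yields (\ref{eq:SingleRegIIDExpTail}). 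You need to add this conditional lower bound --- or some substitute controlling the number of failed tests --- for the proof to go through. A secondary point: your statement that the unrevealed forward-cone variables ``remain i.i.d.\ with their original marginals'' and simultaneously ``form a fresh copy \ldots conditioned on $B(\mathbf{0})$'' is inconsistent as written; only the latter is correct, since certifying $\gamma_{T_j}(T_j) \in \mathcal{K}$ reveals exactly the event $B(\gamma_{T_j}(T_j))$ about the forward cone and nothing more.
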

\noindent {\bf Proof :} For $\x \in \Z^2$ and $\y\in V(\x)$, the  edge between these two neighbouring vertices  is denoted by $\langle \x , \y\rangle$. Let $\Z^2_+ := \{\x \in \Z^2 : \x(1) \wedge \x(2) \geq 0 \}$ denote the positive quadrant.
For $0 \leq n < m$, we define the filtration 
$${\cal G}^m_n := \sigma \bigl( \{(t(\langle \x , \y\rangle),U_\x) : \x \in \Z^2_+ , n \leq ||\x||_1 < m, \y \in V(\x)\} \bigr),$$
where $t(\langle \x , \y\rangle)$ is the passage time (random) 
attached to the edge $\langle \x , \y\rangle$.
We denote $\P_p(\cdot | B(\mathbf{0}))$ as $\tilde{\P}_p(\cdot )$.
Let $\sigma$ be a $\{{\cal G}^k_0 : k \geq 1\}$ stopping time. We first 
show that for any ${\cal G}^{\sigma}_0$ measurable $\Z^2_+$ valued random variable
$W$ with $||W||_1 = \sigma$, we have
\begin{equation}
\label{eq:FKGPercolating}
\tilde{\P}_p( B(W) | {\cal G}^{\sigma}_0) \geq \P_p(B(\mathbf{0})).
\end{equation}
For $n\geq 1$, let $S_n := \{\y \in \Z^2 : \y \in \Z^2_+, ||y||_1 = n\}$ and for any non-empty subset $S$ of $S_n$, the event $\{(0,0) \Rightarrow S\}$ represents that the set of vertices $\{\w : (0,0) \leadsto \w, ||\w||_1 = n \}$ equals exactly $S$. Then for each $n \geq 1$, the event $B(\mathbf{0})$ can be written as disjoint union of events
as follows:
\begin{equation}
\label{eq:B0disjtUni}
B(\mathbf{0}) = \bigcup_{S \subseteq S_n}\{ (0,0) \Rightarrow S \}\bigcap \bigl(\bigcup_{\y \in S}B(\y)\bigr).
\end{equation}
Clearly the event $\{ (0,0) \Rightarrow S \}$ is in ${\cal G}^{n}_0$.
Hence for any event $A$ in ${\cal G}^{\sigma}_0$ we obtain
\begin{align*}
& \P_p( B(W)\cap A \cap B(\mathbf{0}))\\
& = \sum_{n \in \N} \quad \sum_{\w \in \Z^2_+,||\w||_1 =n} \P_p(\{\sigma = n, W = \w \} \cap  B(\w) \cap  A\cap B(\mathbf{0}))\\
& = \sum_{n \in \N} \quad \sum_{\w \in \Z^2_+,||\w||_1 =n} \sum_{S \subseteq S_n}  \P_p \bigl(\{\sigma = n, W = \w \}  \cap  A \cap \{(0,0) \Rightarrow S \}\cap B(\w) \cap \bigl(\cup_{\y \in S}B(\y)\bigr)\bigr).
\end{align*}
The last step follows from (\ref{eq:B0disjtUni}).
Since the two sigma fields ${\cal G}^n_0$ and ${\cal G}^\infty_n$ are independent,
we further have
\begin{align*}
& = \sum_{n \in \N} \quad \sum_{\w \in \Z^2_+,||\w||_1 =n} \sum_{S \subseteq S_n}  \P_p \bigl(\{\sigma = n, W = \w \}  \cap  A \cap \{(0,0) \Rightarrow S \}\bigr)\P_p\bigl(B(\w) \cap \bigl(\cup_{ \y \in S}B(\y)\bigr)\bigr)\\
& \leq \sum_{n \in \N}  \quad \sum_{\w \in \Z^2_+,||\w||_1 =n} \sum_{S \subseteq S_n}  \P_p \bigl(\{\sigma = n, W = \w\}  \cap  A \cap \{(0,0) \Rightarrow S \}\bigr)\P_p(B(\w)) \P_p\bigl(\cup_{\y \in S}B(\y)\bigr)\\
& = \sum_{n \in \N}  \quad \sum_{\w \in \Z^2_+,||\w||_1 =n} \sum_{S \subseteq S_n}  \P_p \bigl(\{\sigma = n, W = \w\}  \cap  A \cap \{(0,0) \Rightarrow S \}\bigr) \P_p\bigl(\cup_{\y \in S}B(\y)\bigr)\P_p(B(\mathbf{0}))\\
& = \P_p( A \cap B(\mathbf{0}))\P_p( B(\mathbf{0})).
\end{align*}
In the second line we have used the FKG inequality. 
Since the event $A \in {\cal G}^\sigma_0$ is chosen arbitrarily, (\ref{eq:FKGPercolating}) follows.

As observed in \cite{BCDN13}, for $\sigma = \sigma_0 := 1$
and $W = \gamma_{\sigma_0}(\sigma_0)$ using (\ref{eq:FKGPercolating}) we have
$$
\tilde{\P}_p(T_1 = 1) = \tilde{\P}_p(T_1 = 1 | {\cal G}^1_0) = 
\tilde{\P}_p(\gamma_1(1) \in {\cal K} | {\cal G}^1_0) \geq \P_p(B(\mathbf{0})).
$$
When $\gamma_1(1)$ is not a percolation point, we wait for the local construction to discover it. On the event $E_1 := \{\gamma_1(1) \notin {\cal K}\}$,  the
random variable $l(\gamma_1(1))$ denoting the length of the longest oriented open path starting from $\gamma_1(1)$, is finite. While constructing $\gamma_{\sigma_1}$, where 
 $\sigma_1 := \sigma_0 + l(\gamma_1(1)) + 1$, we discover the fact that $\{\gamma_1(1) \notin {\cal K}\}$. Thus $\sigma_1$ is stopping time w.r.t. $\{{\cal G}^k_0 : k \geq 0\}$ and the event $E_1$ is in ${\cal G}^{\sigma_1}_0$. Hence (\ref{eq:FKGPercolating}) gives us that
$$
\mathbf{1}_{E_1}\tilde{\P}_p(T_1 = \sigma_1 | {\cal G}^{\sigma_1}_0) = 
\mathbf{1}_{E_1}\tilde{\P}_p(\gamma_{\sigma_1}(\sigma_1) \in {\cal K} | {\cal G}^{\sigma_1}_0) \geq \mathbf{1}_{E_1}\P_p(B(\mathbf{0})).
$$
Repeating the same argument recursively 
for $E_{k+1} := \{\gamma_{\sigma_k}(\sigma_k) \notin {\cal K}\}$ and 
$\sigma_{k+1} := \sigma_k + l(\gamma_{\sigma_k}(\sigma_k)) + 1$ we obtain
\begin{align*}
& \mathbf{1}_{E_{k+1}\cap E_{k}\cap \ldots \cap E_{1}}\tilde{\P}_p(T_1 = \sigma_{k+1} | {\cal G}^{\sigma_{k+1}}_0)\\
& =  \mathbf{1}_{E_{k+1}\cap E_{k}\cap \ldots \cap E_{1}}\tilde{\P}_p(\gamma_{\sigma_{k+1}}(\sigma_{k+1}) \in {\cal K} | {\cal G}^{\sigma_{k+1}}_0)\\
& \geq \mathbf{1}_{E_{k+1}\cap E_{k}\cap \ldots \cap E_{1}}\P_p(B(\mathbf{0})).
\end{align*}
This shows that the number of $\sigma_k$'s tested to find the value of $T_1$ is dominated  by a geometric random variable with success probability $\P_p(B(\mathbf{0}))$. Further
the random variable $\sigma_{k+1} - \sigma_{k}$
has exponential tail for all $k \geq 0$ (see \cite{D84}). 
Finally we observe that on the 
event $E_{k+1}$ the distribution of $\sigma_{k+1} - \sigma_{k}$ as well as $\P(B(\mathbf{0}))$ do not depend on the parameter $q$. This proves (\ref{eq:SingleRegIIDExpTail}).

Finally the proof that the sequence $\{(T_j - T_{j-1}, Y_j) : j \geq 1\}$ is i.i.d., follows from
the same arguments as that of Lemma 2.5 of \cite{BCDN13}.
This basically uses the fact that,
 for any $k \geq 1$ at the $k$-th renewal step $T_k$, the 
only information we have about the future is $\gamma(T_k) \in {\cal K}$.
\qed

From the arguments of the earlier lemma, it follows that the distribution of $T_1$ does not depend on the choice of $q \in [0,1]$.
In general the construction of the path $\gamma_k$ is not time consistent in the sense that for $l \leq j < k$ we may have $\gamma_k(l) \neq \gamma_j(l)$.
But as observed in Lemma 2.1 of \cite{BCDN13}, for some $k \geq 1$ if we have $\gamma_k(k) \in {\cal K}$, then we have $\gamma_m(l) = \gamma_k(l)$ for all $m \geq k$ and $l \leq k$.
This observation together with Lemma \ref{lem:SingleRegIIDExpTail} shows that conditional on the event $B(\mathbf{0})$,  for all $j \in \mathbb{N}$ the limit, $\lim_{k \to \infty} \gamma_k(j)$ exists almost surely. Further by construction, $\{\gamma(j)= \lim_{k \to \infty} \gamma_k(j) : j \geq 0\}$ gives an \textit{infinite oriented open} path starting from the origin. 
In what follows, we denote the conditional probability measure $\mathbb{P}_p(\cdot | B(\mathbf{0}))$ as $\tilde{\mathbb{P}}_p(\cdot )$. Now we prove the following lemma.

\begin{lemma}
Conditional on the event $B(\mathbf{0})$, the infinite path $\gamma$ constructed before has a deterministic direction almost surely.
\end{lemma}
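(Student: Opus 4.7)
The plan is to leverage the i.i.d.\ regenerative structure from Lemma~\ref{lem:SingleRegIIDExpTail} together with the strong law of large numbers. Since every step of the local construction moves $\gamma$ by a single unit in $\ell_1$-norm, one has $\|Y_j\|_1 = T_j - T_{j-1}$, so the exponential tail (\ref{eq:SingleRegIIDExpTail}) gives $\tilde{\E}_p[\|Y_1\|_1] = \tilde{\E}_p[T_1] < \infty$. Writing $\mu_T := \tilde{\E}_p[T_1] \in (0,\infty)$ and $\mu_Y := \tilde{\E}_p[Y_1] \in \R^2$, the SLLN applied to $\{(T_j - T_{j-1},Y_j)\}_{j\geq 1}$ gives
\[
\frac{T_n}{n} \to \mu_T \quad\text{and}\quad \frac{\gamma(T_n)}{n} = \frac{1}{n}\sum_{j=1}^n Y_j \to \mu_Y
\]
almost surely under $\tilde{\P}_p$, where $\mu_Y$ lies in the nonnegative quadrant with $\|\mu_Y\|_1 = \mu_T > 0$. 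In particular $\gamma(T_n)/T_n \to \mu_Y/\mu_T$, a deterministic vector of $\ell_1$-norm one.

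Next I would extend this limit along the regeneration subsequence to the full sequence $\{\gamma(k)\}$. For $k\in\N$ set $N(k) := \sup\{n \geq 0 : T_n \leq k\}$, so that $T_{N(k)} \leq k < T_{N(k)+1}$; the elementary renewal theorem then gives $N(k)/k \to 1/\mu_T$ and hence $T_{N(k)}/k \to 1$ almost surely. Because $\gamma$ advances by one lattice step at a time,
\[
\|\gamma(k) - \gamma(T_{N(k)})\|_1 \leq T_{N(k)+1} - T_{N(k)},
\]
and the exponential tail (\ref{eq:SingleRegIIDExpTail}) together with Borel--Cantelli yields $(T_{n+1}-T_n)/n \to 0$ almost surely; evaluated at $n=N(k)$ this gives $(T_{N(k)+1}-T_{N(k)})/k \to 0$. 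Combining these ingredients,
\[
\frac{\gamma(k)}{k} = \frac{\gamma(T_{N(k)})}{T_{N(k)}}\cdot\frac{T_{N(k)}}{k} + \frac{\gamma(k)-\gamma(T_{N(k)})}{k} \;\longrightarrow\; \frac{\mu_Y}{\mu_T}
\]
almost surely. Consequently $\|\gamma(k)\|_1/k \to 1$ and
\[
\frac{\gamma(k)}{\|\gamma(k)\|_1} \;\longrightarrow\; \frac{\mu_Y}{\|\mu_Y\|_1},
\]
so $\text{arg}\bigl(\gamma(k)/\|\gamma(k)\|_1\bigr)$ converges $\tilde{\P}_p$-almost surely to the deterministic angle $\text{arg}(\mu_Y)$, which is exactly the claim.

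The only non-routine ingredient is controlling the partial excursion between the last regeneration $T_{N(k)}$ and $k$ itself; this is handled entirely by the exponential tail of Lemma~\ref{lem:SingleRegIIDExpTail}, which rules out any single inter-regeneration gap of macroscopic length. Apart from this, the argument is a textbook regenerative application of the strong law, and nothing further is required.
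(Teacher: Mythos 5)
Your argument is correct, and its first half is exactly the paper's: apply the SLLN to the i.i.d.\ sequence $\{(T_j-T_{j-1},Y_j)\}$ from Lemma~\ref{lem:SingleRegIIDExpTail} to get $\gamma(T_n)/T_n\to \E(Y_1)/\E(T_1)$, a deterministic $\ell_1$-unit vector (your identity $\|Y_j\|_1=T_j-T_{j-1}$ is legitimate because the limit path is oriented, consistent with the paper's own use of $\|\gamma(T_j)\|_1=T_j$). Where you diverge is in passing from the regeneration subsequence to the full sequence. You do the standard renewal-theoretic interpolation: introduce $N(k)$, note $T_{N(k)}/k\to 1$, and kill the overshoot via $\|\gamma(k)-\gamma(T_{N(k)})\|_1\le T_{N(k)+1}-T_{N(k)}$ together with $(T_{n+1}-T_n)/n\to 0$, which follows from the exponential tail and Borel--Cantelli. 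The paper instead argues by contradiction: it supposes some subsequence $\gamma(j_k)/\|\gamma(j_k)\|_1$ has a limiting argument differing from $\theta$ by at least $\pi/m$, constructs two disjoint angular sectors $\mathcal{R}^\pm$, observes that visiting $\mathcal{R}^+$ strictly between two consecutive regenerations both lying in $\mathcal{R}^-$ forces a gap $T_{j^n+1}-T_{j^n}\ge n\tan(\pi/4m)$, and rules this out by the same exponential tail and Borel--Cantelli. The two routes rest on identical ingredients (the i.i.d.\ regeneration structure and \eqref{eq:SingleRegIIDExpTail}); yours is the more direct and arguably cleaner bookkeeping, and it delivers the slightly stronger conclusion $\gamma(k)/k\to \E(Y_1)/\E(T_1)$ rather than only convergence of the argument, while the paper's sector argument avoids invoking the renewal theorem for $N(k)$. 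No gap in either half of your write-up.
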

\begin{proof}
Conditional on the event $B(\mathbf{0})$, for the infinite oriented open path $\gamma$ we have that $||\gamma(T_j)||_1  =  T_j$ for any $j \geq 1$. Hence we obtain that $\tilde{{\mathbb P}}_p(\cdot)$ almost surely
\begin{align}
\label{eq:DirLimRegSubseq}
\lim_{j \to \infty} \frac{\gamma(T_j)}{||\gamma(T_j)||_1} = \lim_{j \to \infty} \frac{\gamma(T_j)}{T_j}
=  \lim_{j \to \infty} \frac{\gamma(T_j)/j}{T_j/j}
=  \lim_{j \to \infty} \frac{(\sum_{i=1}^j Y_i)/j}{(\sum_{i=1}^j (T_i - T_{i-1}))/j}
= \frac{\mathbb{E}(Y_1)}{\mathbb{E}(T_1)}.
\end{align}
The last step follows from Lemma \ref{lem:SingleRegIIDExpTail}.
What remains to show is that, $$\lim_{j \to \infty} \gamma(j)/||\gamma(j)||_1 = \lim_{j \to \infty} \gamma(T_j)/||\gamma(T_j)||_1.$$

Let $\theta = \theta(q)$ be the argument of the (deterministic) unit vector ${\mathbb E}(Y_1)/{\mathbb E}(T_1)$. Fix $m \in \mathbb{N}$ and let $B_m$ denote the event that
\begin{align*}
B_m := \{& \text{there exists a subsequence }\{\gamma(j_k)/||\gamma(j_k)||_1 : k \in \mathbb{N}\}\text{ such that }\\
& \lim_{k \to \infty} \gamma(j_k)/||\gamma(j_k)||_1 \text{ exists and }\text{arg}\bigl(\lim_{k \to \infty} \gamma(j_k)/||\gamma(j_k)||_1 \bigr) = \theta^\prime\\&\text{ with }|\theta^\prime - \theta| \geq \pi/m\}.
\end{align*}
Since $m \in \mathbb{N}$ is chosen arbitrarily, it suffices to show that $\tilde{{\mathbb P}}_p(B_m) =0$.
We consider the case that $\theta, \theta^\prime \in (0, \pi/4)$ with $\theta^\prime > \theta$.
For the other cases, the argument is similar.

We consider two disjoint regions in $\mathbb{R}^2$ defined as
\begin{align*}
{\cal R}^- &:= \{(x,t) : x > 0, t \in (0, \tan(\theta + \pi/4m)x) \}
\text{ and }\\
{\cal R}^+ &:= \{(x,t) : x > 0, t > \tan(\theta^\prime - \pi/4m)x\}.
\end{align*}
From (\ref{eq:DirLimRegSubseq}) it follows that there exists $j_0 = j_0(\omega)$ such that
$\gamma(T_j) \in {\cal R}^-$ for all $j \geq j_0$. Our assumption on the subsequence $\{\gamma(j_k) : k \in \mathbb{N}\}$ gives us that there exists $k_0 = k_0(\omega)$ as well, with
$\gamma(j_k) \in {\cal R}^+$ for all $k \geq k_0$.

For $n \in \mathbb{N}$ let $j^n \geq 1$ be the random index such that $j^n := \min\{j \geq 0 : T_j \geq n\}$. We define the event  $A_n(m) := 
\{ T_{j^n + 1} - T_{j^n} \geq  n \tan(\frac{\pi}{4m})\}$. Recall that $(\theta^\prime - \theta ) \geq \pi/m$. Since $||\gamma(T_{j^n } + l) - \gamma(T_{j^n })||_1 \leq T_{j^n + 1} - T_{j^n}$
for any $T_{j^n} \leq l \leq T_{j^n + 1} $, we observe that 
$$\{\gamma(T_{j^n + 1}), \gamma(T_{j^n }) \in {\cal R}^-\}\cap \{\gamma(l) \in {\cal R}^+ \text{ for some }T_{j^n} < l < T_{j^n + 1}\} \subseteq A_n(m).$$
Further, existence of both the random integers $j_0$ and $k_0$ shows that 
$$ B_m \subseteq \limsup_{n \to \infty} A_n(m).$$
On the other hand, (\ref{eq:SingleRegIIDExpTail}) gives us that $\sum_{n = 1}^\infty\tilde{{\mathbb P}}_p(A_n(m)) < \infty$.
Hence by the first Borel-Cantelli lemma we have that $\tilde{{\mathbb P}}_p(B_m) = 0$.
Since $m \in \mathbb{N}$ is chosen arbitrarily, this completes the proof.
\end{proof}

Now we proceed to the proof of Proposition \ref{prop:DirectedPathforOPd2}.

\begin{proof}(\textit{of Proposition \ref{prop:DirectedPathforOPd2}})
 The previous lemma gives us  that for each $q \in [0,1]$, conditional on the event $B(\mathbf{0})$, the infinite path $\gamma = \gamma(q)$ constructed above, has fixed asymptotic direction $\theta$ which depends on $q$. 
From the construction of $\gamma$ it further follows that the choice of $q = 1$ ($q = 0$) gives the \textit{rightmost (leftmost)} infinite oriented open path starting from the origin and hence $\theta(1) = \theta^-_p $ and $\theta(0) = \theta^+_p $. By standard coupling arguments, 
it follows that $\theta(q)$, the asymptotic direction of the `$q$-path' $\gamma$,
is a decreasing function of $q$. Hence to prove Proposition \ref{prop:DirectedPathforOPd2}, it suffices to show that the mapping $\theta : [0,1] \mapsto [\theta^-_p,\theta^+_p ]$ is continuous. 
Since distribution of the random variable $T_1(q)$ does not depend upon $q$, clearly
the function $q \mapsto {\mathbb E}(T_1(q))$ is continuous. It remains to show
that the function $q \mapsto {\mathbb E}(Y_1(q))$ is continuous.
 This part of the proof is motivated from Remark 2.6 of \cite{BCDN13}. Let $\mathbb{Z}^{2}_{+} := \{(y,s) \in \mathbb{Z}^2 : y,  s \geq 0 \}$.
We observe that for any $n \geq 1$ and for $(y,s) \in \mathbb{Z}^{2}_{+}$ with $y+s = n$, we have
\begin{align*}
q \mapsto \tilde{{\mathbb P}}_p( \gamma(T_1) = (y,s)) 
= & \tilde{{\mathbb P}}_p( T_1 = n, Y_1 = (y,s))\\
 = & \tilde{{\mathbb P}}_p(\gamma_n(n) = (y,s),B(y,s), j + l(\gamma_j(j)) < n \text{ for  }1\leq j < n)  \\
= & \tilde{{\mathbb P}}_p(\gamma_n(n) = (y,s),j + l(\gamma_j(j)) < n \text{ for  }1\leq j < n)\tilde{{\mathbb P}}_p(B(y,s))
\end{align*}
is continuous on $[0,1]$. The two events, $\{\gamma_n(n) = (y,s),j + l(\gamma_j(j)) < n \text{ for }1\leq j < n\}$ and $B(y,s)$, depend on disjoint set of edges and hence they are independent.
We have also used the fact that for any $(y,s) \in \mathbb{Z}^{2}_{+}$ with $y+s = n$, the probability ${\mathbb P}_p(\gamma_n(n) = (y,s),j + l(\gamma_j(j)) < n \text{ for  }1\leq j < n)$ depends only on finitely many passage time random variables and uniform random variables. On the other hand, the probability $\tilde{{\mathbb P}}_p(B(y,s))$ does not depend on $q$. 

In order to compute ${\mathbb E}(Y_1)$ we observe that
\begin{align*}
{\mathbb E}(Y_1) = \sum_{n=1}^{\infty}~ \sum_{(y,s)\in \mathbb{Z}^{2}_{+}, y+s = n }(y,s)\tilde{{\mathbb P}}_p(T_1 = n, Y_1 = (y,s)). 
\end{align*} 
Because of (\ref{eq:SingleRegIIDExpTail}) for any $\epsilon > 0$ we can choose $n_0$ uniformly over $q\in [0,1]$ such that 
\begin{align*}
& \sum_{n= n_0}^{\infty}~ \sum_{(y,s)\in \mathbb{Z}^{2}_{+}, y+s = n }
||(y,s)||_1\tilde{{\mathbb P}}_p(T_1 = n, Y_1 = (y,s)) \\
& =  \sum_{n= n_0}^{\infty}~ \sum_{(y,s)\in \mathbb{Z}^{2}_{+}, y+s = n }
n \tilde{{\mathbb P}}_p(T_1 = n, Y_1 = (y,s)) < \epsilon.
\end{align*}
Hence continuity of the function $q \mapsto {\mathbb E}(Y_1)$ follows. 
\end{proof}
Now we fix $q \in [0,1]$ and 
state our next proposition. 
  
\begin{proposition}
\label{prop:CoalesceQpathd2}
For $\x, \y\in \mathbb{Z}^{2}$ with $(\x(1) + \x(2)) = (\y(1) + \y(2))$,
conditional on the event $B(\mathbf{x})\cap B(\mathbf{y})$, the two paths $\gamma^{\x}$ and $\gamma^{\y}$ coalesce almost surely.
\end{proposition}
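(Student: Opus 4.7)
The plan is to reduce coalescence to showing that the horizontal separation of the two paths vanishes at some step. Without loss of generality assume $\x(1) < \y(1)$. Since $\x$ and $\y$ lie on a common anti-diagonal and each $q$-path moves only by $(1,0)$ or $(0,1)$, the vertices $\gamma^{\x}(n)$ and $\gamma^{\y}(n)$ lie on the same anti-diagonal for every $n \geq 0$. Setting $D_n := \gamma^{\y}(n)(1) - \gamma^{\x}(n)(1)$ gives $D_0 > 0$ and $D_{n+1} - D_n \in \{-1,0,1\}$. If $D_n = 0$ for some $n$ then $\gamma^{\x}(n) = \gamma^{\y}(n) =: \z$; the tail of $\gamma^{\x}$ is an infinite oriented open path starting at $\z$, so $\z \in {\cal K}$, and the limiting choice rule $m^q_\infty(\z)$ picks among the percolating open neighbours of $\z$ using only $U_{\z}$ and the forward environment, forcing both walks to agree from step $n$ onwards. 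It therefore suffices to prove $\tilde{\mathbb{P}}_p(D_n = 0 \text{ for some }n) = 1$.

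To do this I would build a joint regeneration structure for the pair $(\gamma^{\x}, \gamma^{\y})$ mirroring Lemma \ref{lem:SingleRegIIDExpTail}. Taking the common anti-diagonal of $\x,\y$ as reference, let ${\cal F}_k$ be the sigma-field of passage times and uniform variables at vertices strictly below anti-diagonal $k$; both $\gamma^{\x}_k$ and $\gamma^{\y}_k$ are ${\cal F}_k$-measurable. Declare $\tau_j$ to be the first $k > \tau_{j-1}$ at which the joint local construction has confirmed that both $\gamma^{\x}(k)$ and $\gamma^{\y}(k)$ lie in ${\cal K}$. Applying the FKG inequality (\ref{eq:FKGPercolating}) simultaneously to the two candidate vertices gives a lower bound of order $\mathbb{P}_p(B(\mathbf{0}))^2$ per trial, and iterating past the longest finite oriented open paths from each failing candidate (exactly as in the $\sigma_k$ iteration of Lemma \ref{lem:SingleRegIIDExpTail}) yields exponential tails for $\tau_j - \tau_{j-1}$; the triples $(\tau_j - \tau_{j-1},\,\gamma^{\x}(\tau_j) - \gamma^{\x}(\tau_{j-1}),\,\gamma^{\y}(\tau_j) - \gamma^{\y}(\tau_{j-1}))$ are then i.i.d.

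Consequently $\{D_{\tau_j}\}_{j \geq 0}$ is a random walk on $\mathbb{Z}$ whose i.i.d.\ increments possess finite exponential moments. Proposition \ref{prop:DirectedPathforOPd2} gives that both paths are asymptotically directed along the same angle $\theta(q)$, so $D_n/n \to 0$ almost surely; combined with the strong law along $\{\tau_j\}$ this forces $\mathbb{E}[D_{\tau_1} - D_{\tau_0}] = 0$. Irreducibility of $\{D_{\tau_j}\}$ on $\mathbb{Z}$ follows by exhibiting explicit local configurations yielding net $\pm 1$ changes between consecutive joint regenerations with positive probability. The Chung--Fuchs recurrence theorem then gives $D_{\tau_j} = 0$ for some $j$ almost surely, hence $D_n = 0$ for some $n$, and coalescence follows.

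The main obstacle is building the joint regenerations cleanly. The tests $l_{k-j}(\gamma_k(j))$ in Lemma \ref{lem:SingleRegIIDExpTail} probe forward of the current anti-diagonal, and for two paths these probes may overlap and create dependencies that spoil the product structure which FKG exploits. One must organise the joint exploration so that at any candidate $\tau_j$ all queried edges and $U$-variables lie strictly behind anti-diagonal $\tau_j$, and so that the waiting-past-failed-candidate iteration still terminates with exponential tails after the single-vertex check is replaced by the simultaneous one; the planar fact that $\gamma^{\x}(n)$ and $\gamma^{\y}(n)$ always share an anti-diagonal is what makes the simultaneity condition natural.
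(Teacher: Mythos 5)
Your setup --- joint regenerations with exponential tails, reduction to the separation process hitting $0$, and the observation that the two paths share an anti-diagonal and merge for good once they meet --- matches the paper. But the core of your argument has a genuine gap: the process $\{D_{\tau_j}\}$ is \emph{not} a random walk with i.i.d.\ increments, because the two paths live in the \emph{same} environment. Between consecutive joint regenerations the regions explored by the two paths overlap when the separation is small, so the law of the increment $D_{\tau_{j+1}}-D_{\tau_j}$ depends on the current separation $D_{\tau_j}$ (indeed, if the paths have met the increment is identically $0$). What translation invariance gives you is only that $\{D_{\tau_j}\}$ (equivalently $Z_j=\|\gamma^{\x}(\tau_j)-\gamma^{\y}(\tau_j)\|_1$ in the paper) is a \emph{Markov chain}. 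You notice the overlap problem in your last paragraph but treat it as a nuisance for the FKG step, when in fact it invalidates the i.i.d.\ claim on which your Chung--Fuchs conclusion rests. Relatedly, your derivation of $\mathbb{E}[D_{\tau_1}-D_{\tau_0}]=0$ from the common asymptotic direction presupposes the SLLN for i.i.d.\ increments; for a state-dependent Markov chain, an asymptotically vanishing drift does not imply recurrence (Lamperti-type examples with drift of order $c/x$ at state $x$ are transient), so one must actually quantify the drift at each separation $m$.

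The paper's proof supplies exactly the two ingredients you are missing. First, a reflection argument: on the event $\{\tau_{j+1}-\tau_j<m/2\}$ the two paths explore disjoint $\ell^1$-triangles $R_1,R_2$ around their current positions, and swapping the realizations of the passage times and uniform variables between $R_1$ and $R_2$ shows that $I=Z_{j+1}-Z_j$ restricted to this event is \emph{symmetric} in law given $Z_j=m$ --- so the drift is exactly zero up to an error supported on $\{\tau_{j+1}-\tau_j\ge m/2\}$, which by the exponential tail contributes at most $O(e^{-cm})$. Second, a Lyapunov function: with $f=\log$, a Taylor expansion plus the symmetry kills the first-order term and leaves $-\tfrac{1}{m^2}\mathbb{E}[I^2\mathbf{1}_{\{\tau_{j+1}-\tau_j<m/2\}}]\le-\alpha/m^2$, which dominates the $O(e^{-cm})$ error for $m$ large; hence $\mathbf{1}_{\{\nu>j\}}\log Z_j$ is a non-negative supermartingale and the chain enters $[0,m_0]$ (and is then absorbed at $0$ with probability bounded below) almost surely. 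If you want to keep your framework, you would need to replace ``i.i.d.\ increments $+$ Chung--Fuchs'' by this symmetrization-plus-Lamperti recurrence criterion; as written, the step from the joint regeneration structure to recurrence of $D_{\tau_j}$ does not go through.
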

\begin{proof}
 We first define regeneration times
for the process $\{(\gamma^{\x}(j),\gamma^{\y}(j)) : j \geq 0\}$ as follows.
Set $\tau_0 = \tau_0(\x, \y) = 0$ and for $j \geq 1$ we define
\begin{align}
\label{def:JtRegQ}
\tau_j = \tau_j(\x, \y) := \inf\{n > \tau_{j-1}(\x, \y) : \gamma^{\x}(n), \gamma^{\y}(n) \in {\cal K}\}.
\end{align}
Same arguments as in the proof of Lemma 3.1 of \cite{BCDN13} give us that for all $j ,n \geq 1$,
\begin{equation}
\label{eq:JtRegExpTail}
{\mathbb P}(\tau_j(\x,\y) - \tau_{j-1}(\x,\y) \geq n) \leq C_3\exp{(-C_4n)},
\end{equation}
where the constants $C_3,  C_4 >0$ depend only on $p$. The brief heuristics are as follows :
if it is a marginal regeneration for the first path, only then examine for the regeneration for the second path  and an
 application of FKG inequaity gives us that the probability of such an event is at least ${\mathbb P}_p(B(\mathbf{0}))$, which is strictly positive for $p > p^\rightarrow_c$.

For each $j\geq 1$, at the $j$-th joint regeneration step $\tau_j$, the only 
information that we have about the future is that both the points, $\gamma^\x(\tau_j)$ and 
$\gamma^\y(\tau_j)$, are in ${\cal K}$. Similar argument as in \cite{BCDN13} give us that 
the process $\bigl\{\bigl(\gamma^{\x}(\tau_j),\gamma^{\y}(\tau_j)\bigl): j \geq 0 \bigl\}$
is Markov. By construction for all $j \geq 0$ we have
 $$
 (\gamma^{\x}(\tau_j)(1) + \gamma^{\x}(\tau_j)(2)) = (\gamma^{\y}(\tau_j)(1) + \gamma^{\y}(\tau_j)(2)) = (\x(1) + \x(2)) + \tau_j.$$
  From translation invariance of our model and from the above observation, it follows that the process 
\begin{equation}
\label{def:Z_l}
\{Z_j = Z_j ( \x, \y) := ||\gamma^{\x}(\tau_j) - \gamma^{\y}(\tau_j)||_1 : j \geq 0\}
\end{equation}
is a non-negative Markov chain with $ 0 $ as the only absorbing state. 
It suffices to show that the Markov chain $\{Z_j : j \geq 0\}$ will be absorbed at $0$
eventually.

Let $m_0 \in 2\mathbb{N}$ be a constant which will be specified later. 
For $\x, \y$ chosen as above, let $\nu = \nu (\x, \y) := \inf\{j \geq 1 : Z_j (\x,\y)  \leq  m_0 \}$ denote the first time that the process enters $[0,m_0]$.
For all $m \leq m_0$ there exists $p_0 = p_0(m_0) > 0$
such that $${\mathbb P}(Z_{j + \frac{m_0}{2}} = 0 | Z_{j} = m) \geq p_0.$$ Hence to prove Proposition \ref{prop:CoalesceQpathd2} it suffices to show that the hitting time $\nu(\x,\y)$ is almost surely finite.  The following lemma proves this fact and hence completes the proof of Proposition \ref{prop:CoalesceQpathd2}.
\end{proof}

\begin{lemma}
\label{lem:QCoalesceTimeFinite}
For $\x,\y$ chosen as above, conditional on the event $B(\x)\cap B(\y)$, the hitting time $\nu = \nu(\x,\y)$ is finite almost surely .
\end{lemma}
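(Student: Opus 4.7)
The strategy is to reduce the problem to a one-dimensional Markov chain and then apply a Foster--Lyapunov argument.

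By construction of the joint regeneration times $\tau_j$, both vertices $\gamma^{\x}(\tau_j)$ and $\gamma^{\y}(\tau_j)$ lie on the common diagonal $\{\mathbf{a} \in \mathbb{Z}^2 : \mathbf{a}(1)+\mathbf{a}(2) = (\x(1)+\x(2))+\tau_j\}$, so $Z_j = 2|D_j|$ where $D_j := \gamma^{\x}(\tau_j)(1) - \gamma^{\y}(\tau_j)(1)$. Translation invariance of the model reduces the transition law of the pair $(\gamma^{\x}(\tau_j),\gamma^{\y}(\tau_j))$ to a function of $D_j$ alone, so $\{D_j : j \geq 0\}$ is itself a Markov chain on $\mathbb{Z}$. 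Combined with the trivial bound $\|Y_j\|_1 \leq \tau_j - \tau_{j-1}$, the estimate (\ref{eq:JtRegExpTail}) shows that the increments $D_{j+1} - D_j$ have exponential tails uniformly in the current state.

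Next I would establish that when $|D_j|$ is large, the chain has asymptotically vanishing drift and uniformly positive conditional variance. Inside a single regeneration block $(\tau_j,\tau_{j+1}]$, the $q$-construction of $\gamma^{\x}$ and $\gamma^{\y}$ probes only edges in spatial regions whose diameters are controlled by $\tau_{j+1}-\tau_j$, which has an exponential tail by (\ref{eq:JtRegExpTail}). Hence, for $|D_j|$ large, with overwhelming probability the two paths query disjoint sets of edges in the block, and the joint increment $(Y^{\x}_{j+1},Y^{\y}_{j+1})$ is close in distribution to a product of two independent copies of $Y_1$. Since the two marginals have the same law, this yields $\mathbb{E}[D_{j+1}-D_j \mid D_j=d] \to 0$ as $|d|\to\infty$, while a short computation, using that the first coordinate of $Y_1$ is neither deterministic nor always zero, provides a strictly positive lower bound $\sigma^2$ on the conditional variance.

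I would then apply the Lyapunov function $f(x):=\log(1+x)$ to $\{|D_j|\}$. A second order Taylor expansion together with uniform integrability coming from (\ref{eq:JtRegExpTail}) gives, for all $|d|$ sufficiently large,
\begin{equation*}
\mathbb{E}[f(|D_{j+1}|)-f(|D_j|)\mid D_j=d] \leq -\frac{\sigma^2}{4(1+|d|)^2}.
\end{equation*}
Choose $m_0$ so that this bound holds for every $|d|\geq m_0/2$. Then $f(|D_j|)$ is a nonnegative supermartingale as long as $|D_j|>m_0/2$, and since the increments of $|D_j|$ have variance bounded below, the chain cannot converge to any positive limit. A standard Foster--Lyapunov recurrence criterion (as used in \cite{BCDN13}) therefore yields $\nu(\x,\y)<\infty$ almost surely.

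The main obstacle is the quantitative independence claim used in the drift step: one must bound, in terms of $|D_j|$, the total variation distance between the conditional law of $(Y^{\x}_{j+1},Y^{\y}_{j+1})$ given $D_j$ and the product of two independent copies of the single-path law. Making this precise requires a careful inspection, within one regeneration block, of which edges each of the two $q$-paths can possibly query, and combining the exponential tail of $\tau_{j+1}-\tau_j$ with the spatial separation $|D_j|$ to show that the probability of a common edge being relevant tends to zero as $|D_j|\to\infty$.
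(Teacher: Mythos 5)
Your overall strategy --- pass to a one-dimensional Markov chain, apply a $\log$-Lyapunov function, control the drift by arguing that for large separation the two paths behave almost independently over one joint regeneration block, and absorb the long-block contribution via the exponential tail (\ref{eq:JtRegExpTail}) --- is exactly the paper's. The genuine gap is in the first-order drift term. With $f=\log$, the second-order Taylor term only buys a negative contribution of order $1/d^2$, while the first-order term is $f^{\prime}(d)\,\E[I\mid D_j=d]\approx \E[I\mid D_j=d]/d$; so it is not enough to show that the conditional drift ``tends to $0$'' as $|d|\to\infty$ --- you need it to be $o(1/d)$, indeed dominated by a small multiple of $\sigma^2/d$, and your plan as written never produces such a quantitative bound. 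You correctly flag this as ``the main obstacle'' but leave it open, so the claimed inequality $\E[f(|D_{j+1}|)-f(|D_j|)\mid D_j=d]\leq -\sigma^2/(4(1+|d|)^2)$ is not established.

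The paper closes precisely this step with an exact symmetry rather than an approximate-independence estimate. On the event $\{\tau_{j+1}-\tau_j<m/2\}$ both paths probe only edges in two disjoint $\|\cdot\|_1$-triangles $R_1,R_2$ of radius $m/2$ around $\gamma^{\x}(\tau_j)$ and $\gamma^{\y}(\tau_j)$, and interchanging the passage-time and uniform variables between $R_1$ and $R_2$ is a measure-preserving map that swaps the two increments while preserving the number of steps to the next joint regeneration. This yields the distributional identity (\ref{eq:Isymmetry}), so the first \emph{and third} conditional moments of $I\mathbf{1}_{\{\tau_{j+1}-\tau_j<m/2\}}$ vanish exactly (the third moment matters because the Taylor expansion of $\log$ is pushed to fourth order, whose sign is favourable), and the long-block event contributes only $O(e^{-cm})$ by Cauchy--Schwarz and (\ref{eq:JtRegExpTail}). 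If you insist on your total-variation route, note that the overlap probability in fact decays exponentially in $|d|$, which does beat $1/d^2$; but you would still have to argue that on the no-overlap event the two increments are exchangeable, and that argument \emph{is} the swap trick, so you may as well use it directly. The rest of your plan (uniform lower bound $\alpha$ on the conditional second moment via explicit configurations, choice of $m_0$, and the conclusion that a convergent integer-valued chain with non-degenerate increments must be absorbed at $0$) matches the paper.
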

\begin{proof} Let $\mathbf{1}_E$ denote indicator function of the event $E$.
 Define a non-negative process  $
 \{L_j := \mathbf{1}_{\{\nu(\x,\y) > j\}}f(Z_j(\x,\y)) : j \geq 1\}$
 where the mapping $f : (0,\infty) \mapsto \mathbb{R}$ is given by $f(x) := \log(x)$.
We then prove that the process $\{L_j : j \geq 1\}$ is a supermartingale. Being a non-negative supermartingale  the process $\{L_j : j \geq 1\}$
must have an almost sure limit. On the other hand, the function $\log(\cdot)$ is bijective, implying that the process $\{L_j : j \geq 1\}$ is Markov with the only absorbing state $0$. Hence we have $L_j \to 0$ as $j \to \infty$ almost surely.
Since it is a discrete valued process, $\nu$
is finite almost surely.

It suffices to show that the process $\{L_j : j \geq 0\}$
is a supermartingale. We observe that $Z_{j + 1} - Z_j \leq 2(\tau_{j+1} - \tau_j)$ for all $j \geq 0$. Hence from (\ref{eq:JtRegExpTail}) we have
${\mathbb E}(L_{j+1} |  L_{j}) < \infty$.

Let us suppose $ Z_j = m $ for some $m > m_0$. 
Next we consider two disjoint $||\quad||_1$ triangles, $R_1$ and $R_2$ centered around
the points $\gamma^{\x}(\tau_j)$ and $\gamma^{\y}(\tau_j)$ respectively, given by 
\begin{align*}
R_1 & := \{\z : (\z - \gamma^{\x}(\tau_j)) \in \mathbb{Z}^2_+ , ||\z - \gamma^{\x}(\tau_j)||_1 < m/2  \}\text{ and }\\
R_2 &:= \{\z : (\z - \gamma^{\y}(\tau_j)) \in \mathbb{Z}^2_+ , ||\z - \gamma^{\y}(\tau_j)||_1 < m/2  \}.
\end{align*}
We interchange the realizations of the passage time random variables and uniform random variables associated to edges and vertices inside $R_1$ by those in $ R_2$ and vice versa.
Restricting our attention to the event $ \{ \tau_{j+1} - \tau_{j} < m/2\}$, for the 
paths starting from  $ \gamma^{\x}(\tau_j)$ and $ \gamma^{\y}(\tau_j)$, 
constructed using the original realizations,  and the interchanged realizations,
we observe that the number 
of steps required for the next regeneration remain exactly the same.
 Further, the increment of the 
first path (starting from $\gamma^{\x}(\tau_j)$) at the next regeneration step
 (using the new realizations) becomes the increment of the second path 
starting from $\gamma^{\y}(\tau_j)$ (using the original or old realizations)  
and vice versa. 

Taking $ I :=  Z_{j+1} - Z_{j} $, the above discussions can be summarized as
\begin{equation}
\label{eq:Isymmetry}
\mathbf{1}_{\{(\tau_{j+1} - \tau_{j}) < m/2\}} I \;| \{ Z_{j} = m \}
\stackrel{d}{=}  -  \mathbf{1}_{\{(\tau_{j+1} - \tau_{j}) < m/2\}} I \;| \{ Z_{j} = m\}.
\end{equation}

Now for $Z_j = m > m_0$ we obtain
\begin{align}
\label{CalcLJSuperMart}
 \lefteqn{  \mathbb{E}[(L_{j+1} - L_{j})| L_{j} = f(m) ] } \nonumber \\
& \leq    \mathbb{E}[f(Z_{j+1}) - f(Z_{j})| Z_{j} = m]\nonumber\\
 & =  \mathbb{E} \bigl [\bigl ((f(Z_{j+1}) - f(Z_{j}))\mathbf{1}_{\{\tau_{j+1} - \tau_{j} < m/2\}}\bigr) + \bigl( (f(Z_{j+1}) - f(Z_{j}))\mathbf{1}_{\{\tau_{j+1} - \tau_{j} \geq m/2\}} \bigr) \bigl | Z_{j} = m \bigr ] \nonumber\\
& \leq  \mathbb{E}[(f(m + I) - f(m))\mathbf{1}_{\{\tau_{j+1} - \tau_{j} < m/2\}} \;| 
 Z_{j} = m ] +
2\mathbb{E}[(\tau_{j+1} - \tau_{j})\mathbf{1}_{\{\tau_{j+1} - \tau_{j} \geq m/2\}}| 
 Z_{j} = m ].
\end{align}
Before we calculate the terms we note that, for any $ l > 0 $,   
\begin{align*}
& \mathbb{E}[ (\tau_{j+1} - \tau_{j})^l \mathbf{1}_{\{\tau_{j+1} - \tau_{j} \geq m/2\}}| 
 Z_{j} = m ] \\
\leq & \mathbb{E}[ (\tau_{j+1} - \tau_{j})^{2l} | 
 Z_{j} = m ]^{1/2} \mathbb{P} [ \tau_{j+1} - \tau_{j} \geq m/2 |  Z_{j} = m ]^{1/2} \\
\leq &  C_3 \beta_m(l) \exp ( -  C_4 m/4 )    
\end{align*}
where  $ C_3$ and $C_4$ are as in (\ref{eq:JtRegExpTail}). 
In order to study the behaviour of the sequence  $\beta_m(l)$ as $m\to \infty$, 
we need to introduce few notation.

Let $\{T^{(1)}_n : n \in \N\}$ and $\{T^{(2)}_n : n \in \N\}$  be two independent families of 
i.i.d. copies of $T_1(\mathbf{0})$.
For $m \in \mathbb{N}$, set $S^{(i)}_m := \sum_{j=1}^{m} T^{(i)}_j $ for $i = 1,2$ and define
$Y := \inf\{S^{(1)}_m : m \geq 1, S^{(1)}_m = S^{(2)}_l\text{ for some }l\geq 1\}$.

It is konwn that $Y$ has finite moments of all orders (see Subsection 3.2 of \cite{RSS15} for details). For $\beta_m(l)$, we note that the probability $\mathbb{P} [ \tau_{j+1} - \tau_{j} \geq m/2 |  Z_{j} = m ] \to 1$ as $m\to \infty$. Hence for large $m$, in between $\tau_j$ and $\tau_{j+1}$, with high probability both the paths explore disjoint regions.
This gives us that $\beta_m(l) \to \mathbb{E}[Y^{2l}]^{1/2} > 0$ as $m\to \infty$.

Now for the first term, we have $ f^{(2)} (x) = - 1/x^2,  
f^{(3)} (x) = + 2/x^3, $ and $ f^{(4)} (x) = - 6/x^4 < 0 $ for all 
$ x > 0 $. Thus, using Taylor's expansion, we have
\begin{equation*}
f(m+I) - f(m) \leq I f^{\prime} (m) - \frac{ 1}{ m^2 } I^2 + \frac{ 2}{ m^3 } I^3  .
\end{equation*} 
Therefore, 
\begin{align*}
\lefteqn{ \mathbb{E}[(f(m + I) - f(m))\mathbf{1}_{\{\tau_{j+1} - \tau_{j} < m/2\}} \;| 
 Z_{j} = m ] }  \\  
&   \leq f^{\prime} (m) \mathbb{E}[ I \mathbf{1}_{\{\tau_{j+1} - \tau_{j} < m/2\}} \;|  Z_{j} = m ]   
- \frac{ 1}{ m^2 } \mathbb{E}[ I^2 \mathbf{1}_{\{\tau_{j+1} - \tau_{j} < m/2\}} \;| Z_{j} = m  ]  \\
& \qquad +    \frac{2}{ m^3} \mathbb{E}[ I^3 \mathbf{1}_{\{\tau_{j+1} - \tau_{j} < m/2\}} \;| Z_{j} = m  ] \\
& = - \frac{ 1}{ m^2 } \mathbb{E}[ I^2 \mathbf{1}_{\{\tau_{j+1} - \tau_{j} < m/2\}} \;| Z_{j} = m  ]  
\end{align*} 
using (\ref{eq:Isymmetry}).
 Further, it is easy to observe that there exists $\alpha > 0$ such that by creating suitable configurations, 
we have $ \mathbb{P} ( | I | \mathbf{1}_{\{\tau_{j+1} - \tau_{j} < m/2\}} \geq 1  \;|  Z_{j} = m ) \geq \alpha > 0 $ 
for all $ m > m_0 $. Therefore, $\mathbb{E}[ I^2 \mathbf{1}_{\{\tau_{j+1} - \tau_{j} < m/2\}} \;| Z_{j} = m]\geq \alpha $
for any $ m \geq m_0 $. Thus, we have, 
\begin{align*}
  \mathbb{E}[(L_{j+1} - L_{j})| L_{j} = f(m) ]
 \leq - \frac{ \alpha }{ m^2 }  + 2  C_3 \exp ( - C_4 m / 4 ).
\end{align*}
So, for a suitable choice of $ m_0 \in 2\mathbb{N}$ and for all $ m > m_0$, we have 
 $ \mathbb{E}[(L_{j+1} - L_{j})| L_{j} = f(m) ] \leq 0 $. This completes the proof.
\end{proof}

In order to prove Theorem \ref{thm:DirectedGeodesicsford2}, we first of all observe that
since we are working with passage time distribution satisfying (\ref{def:DurettLiggettClass}), any oriented open path (finite or infinite) must be a geodesic. Because of Proposition \ref{prop:DirectedPathforOPd2}, for any $\theta \in [\theta^-_p, \theta^+_p]$, 
on the event $B(\mathbf{0})$ 
there exists $q \in [0,1]$ such that the infinite oriented `$q$' path $\gamma$ (which is a geodesic as well) has asymptotic direction $\theta$. On the complement event we need a `sandwiching' argument. In order to do that we describe 
\textit{bi-directional percolation point} which was used in \cite{WZ08}. 
We say that there is an \textit{anti-oriented} open path from 
$\y$ to $\x$ if $\x \leadsto \y$.
The set of points with infinite anti-oriented open paths is denoted by ${\cal K}^{\text{anti}}$. 
\begin{definition}
\label{def:BiPercPoint}
Any point $\x \in \mathbb{Z}^2$ with $\x \in {\cal K}\cap {\cal K}^{\text{anti}}$ is termed a \textit{bi-directional percolation point}.
\end{definition}
As observed in \cite{WZ08}, for $p > p^{\rightarrow}_{c}$ we have 
$$
{\mathbb P}_p(\mathbf{0}\text{ is a bi-directional percolation point}) = {\mathbb P}_p(B(\mathbf{0}))^2 > 0.
$$
While proving Theorem \ref{thm:DirectedGeodesicsford2}, we show that there are bi-directional percolation points ${\mathbb P}_p$ almost surely for $p>p^\rightarrow_c$.
Before proceeding further we comment that for any bi-directional percolation point,
the concatenation of infinite oriented open path and infinite anti-oriented open path
gives a bi-infinite geodesic path. Hence existence of bi-directional percolation point proves Theorem \ref{thm:BiInfGeoesic}.

\begin{proof}(\textit{of Theorem \ref{thm:DirectedGeodesicsford2}})
Note that on the event $B(\mathbf{0})$, the proof of Theorem \ref{thm:DirectedGeodesicsford2} follows from Proposition 
\ref{prop:DirectedPathforOPd2}. On the event $B(\mathbf{0})^c$, from the origin we consider the nearest right and nearest left bi-directional percolation points on the line $ y = -x$.
 More formally, let
\begin{align*}
j_r & = j_r(\omega) := \min\{j > 0 : (j,-j) \in  \mathbb{Z}^2 \cap {\cal K}\cap {\cal K}^{\text{anti}}\} \text{ and }\\
j_l & = j_l(\omega) := \max\{j < 0 : (j,-j) \in \mathbb{Z}^2 \cap {\cal K}\cap {\cal K}^{\text{anti}}\}.
\end{align*}
It is not difficult to see that both the random variables, $j_r$ and $j_l$, are almost surely finite. Here we present the heuristics briefly. For $(y,s) \in \mathbb{Z}^2$, let $ l_{\text{anti}}(y,s)$ denote the length of the longest anti-oriented open path starting from $(y,s)$. On the event $B(\mathbf{0})^c$, set $l_0 := l(\mathbf{0})\wedge l_{\text{anti}}(\mathbf{0}) + 2$. Examining open (closed) status of the lattice paths,
consisting of the vertices in the set $V_0 := \{(y,s): \text{ either }(y,s)\in \mathbb{Z}^2_+, 
\text{ or }(-y,-s)\in \mathbb{Z}^2_+ , ||(y,s)||_1 \leq l_0\}$, we discover that $(0,0) \notin {\cal K}\cap {\cal K}^{\text{anti}}$.
Next we examine both the points $(l_0, -l_0)$ and $(-l_0, l_0)$. Whether the point $(l_0, -l_0)$ is a bi-directional percolation point or not depends on the edges consisting of the vertices
in the set $$V^{r}_1 := \{(y,s) :\text{ either }((y,s) - (l_0,-l_0)) \in \mathbb{Z}^2_+, 
\text{ or }((l_0,-l_0) - (y,s)) \in \mathbb{Z}^2_+ \}.$$
 Since both the sets are disjoint, we have
 ${\mathbb P}_p\bigl((l_0,-l_0) \in {\cal K}\cap {\cal K}^{\text{anti}}\bigr) = {\mathbb P}_p(B(\mathbf{0}))^2$. On the other hand, on the event   $\{(l_0,-l_0) 
 \notin {\cal K}\cap {\cal K}^{\text{anti}}\}$, it is enough to examine the lattice paths
consisting of the vertices in the set $V^{r}_1 \cap \{(y,s) : ||(y,s) - (l_0,-l_0)||_1
\leq  l^r_1\}$ where   $l^r_1 := l(l_0,-l_0)\wedge l_{\text{anti}}(l_0,-l_0) + 2$
 and we move on to examine the next point $((l_0 + l^r_1), - (l_0 + l^r_1))$.
 Our choice ensures that the corresponding sets are disjoint and hence again we have ${\mathbb P}_p\bigl(((l_0 + l^r_1), - (l_0 + l^r_1))\in {\cal K}\cap {\cal K}^{\text{anti}}\bigr) = {\mathbb P}_p(B(\mathbf{0}))^2$. 
Similar argument holds for the point $(-l_0, l_0)$ also. Hence a geometric argument shows that
both the random integers, $j_r$ and $j_l$, are finite almost surely. 

Fix $q \in [0,1]$ and  consider the infinite oriented open $q$ paths $\gamma^{(j_r, - j_r)}$ and $\gamma^{(-j_l,j_l)}$ starting from the points $(j_r, - j_r)$ and $(-j_l,j_l)$ respectively. Let 
$$
n_0 = n_0(\omega) := \min\{n \geq 1 :\gamma^{(j_r, - j_r)}(n) = \gamma^{(-j_l,j_l)}(n)\}.
$$ Because of Proposition \ref{prop:CoalesceQpathd2}, $n_0$ is finite almost surely.
The infinite anti-oriented open $q$ paths starting from the points $(j_r, - j_r)$ and 
$(-j_l,j_l)$ are denoted by $\gamma^{(j_r, - j_r),\text{anti}}$ and $\gamma^{(-j_l,j_l),\text{anti}}$ respectively. Set $n_0^{\text{anti}} := \min\{n : \gamma^{(j_r, - j_r),\text{anti}}(n)=\gamma^{(-j_l,j_l),\text{anti}}(n)\}$, which is finite almost surely.

This ensures that the origin is enclosed by a pair of coalescing oriented geodesics and a pair of coalescing anti-oriented geodesics.
Let 
\begin{align*}
\Delta := \{&(y,s) \in \mathbb{Z}^2 : \gamma^{(-j_l,j_l)}(s) \leq y \leq \gamma^{(j_r, - j_r)}(s)\text{ for }0 \leq s \leq n_0\text{ or }\\
&\gamma^{(-j_l,j_l),\text{anti}}(s) \leq y \leq \gamma^{(j_r, - j_r),\text{anti}}(s)\text{ for }  0 \leq s \leq n_0^{\text{anti}} \}
\end{align*}
 denote the lattice points in the enclosed region. 
Next we choose a finite geodesic path between the origin and the point $\gamma^{(j_r,-j_r)}(n_0)$ using the lattice paths consisting of vertices from the set $\Delta$ only. We note that such a choice
is always possible. If not, i.e., if every geodesic path from $(0,0)$ to $\gamma^{(j_r,-j_r)}(n_0)$ contains a point outside the set $\Delta$, then this contradicts the fact that the bi-infinite open paths passing through the points $(j_r,-j_r)$ and $(-j_l,j_l)$, obtained by concatenations of the 
oriented $q$ paths and anti-oriented $q$ paths, are geodesics.
Similar argument shows that the concatenation 
of this finite geodesic with the oriented infinite open $q$ path starting from the point
$\gamma^{(j_r,0)}(n_0)$ onward gives an infinite geodesic. Since modifications are done on finitely many edges only, this newly constructed geodesic path will have the same asymptotic direction. 
This completes the proof of  Theorem \ref{thm:DirectedGeodesicsford2}. 
\end{proof}

\begin{proof}\textit{( of Theorem \ref{thm:CoalGeoesic})}
We first prove it for For $ \x, \y \in \mathbb{Z}^2$ chosen as above.
Fix $\theta \in [\theta^-_p, \theta^+_p]$ and choose $q \in [0,1]$ such that conditional on the event $B(\mathbf{0})$, the $q$ infinite oriented open path $\gamma$ almost surely has asymptotic direction $\theta$. For $ \x, \y \in \mathbb{Z}^2$ chosen as above with $\x(1) > \y(1)$, 
we consider the nearest bi-directional percolation points 
towards right of $\x$ and towards left of $\y$. These two percolation points are denoted by 
$\x^r$ and $\y^l$ respectively. Proposition \ref{prop:CoalesceQpathd2} gives us that
the infinite oriented open $q$ paths starting from these points 
coalesce almost surely. Let $n_1= n_1(\omega) := \min\{ n \geq 1 : \gamma^{\x^r}(n) = \gamma^{\y^l}(n)\}$. Further, the proof of Theorem \ref{thm:DirectedGeodesicsford2} shows that a finite geodesic between $\x$ and $\gamma^{\x^r}(n_1)$, which does not cross the bi-infinite $q$ paths passing through $\x^r$ and $\y^l$, concatenated with the infinite oriented $q$ path starting from $\gamma^{\x^r}(n_1)$ gives an infinite geodesic starting from $\x$ with asymptotic direction $\theta$. Same argument holds for $\y$ also. This proves Theorem \ref{thm:CoalGeoesic} for the specific choice of $ \x, \y $.

Next, we show that it is enough to prove Theorem \ref{thm:DirectedGeodesicsford2}
for the given choice of $\x, \y$.
Indeed, for general $\x, \y \in \mathbb{Z}^2$ with $(\x(1) + \x(2)) > (\y(1) + \y(2))$, the above argument gives,
\begin{align*}
 {\mathbb P}_p \bigl[ \bigcap_{ \w \in \mathbb{Z}^2, \x(1) + \x(2) = \w(1) + \w (2) } \{ \text{the paths } \gamma^{\x} \text{ and }\gamma^{\w} \text{  coincide eventually} \} \bigr] = 1; \\
 {\mathbb P}_p  \bigl[ \bigcap_{ \w^\prime \in \mathbb{Z}^2, \w^\prime (1) + \w^\prime (2) = \y(1) + \y (2) }  \bigl\{ \text{the paths } \gamma^{\y} \text{ and }
 \gamma^{\w^\prime} \text{  coincide eventually} \} \bigr] = 1.
\end{align*}
Since, $\gamma^\y((\x(1) + \x(2)) - (\y(1) + \y(2))) \in \mathbb{Z}^2$ with 
$$\bigl(\gamma^\y((\x(1) + \x(2)) - (\y(1) + \y(2)))(1) + \gamma^\y((\x(1) + \x(2)) - (\y(1) + \y(2)))(2)\bigr ) = (\x(1) + \x(2)),$$ the paths $ \gamma^{\x} $ and $ \gamma^{\y}$ meet.
This completes the proof of Theorem \ref{thm:CoalGeoesic}.
\end{proof}




\end{document}